\numberwithin{equation}{section}
\newtheorem{Theorem}{Theorem}[section]
\newtheorem{Proposition}[Theorem]{Proposition}
 { \theoremstyle{definition}
\newtheorem{Remark}[Theorem]{Remark} }
\begin{document}

\allowdisplaybreaks

\newcommand{\arXivNumber}{1506.00444}

\renewcommand{\PaperNumber}{019}

\FirstPageHeading

\ShortArticleName{The Third, Fifth and Sixth Painlev\'{e} Equations on Weighted Projective Spaces}

\ArticleName{The Third, Fifth and Sixth Painlev\'{e} Equations\\ on Weighted Projective Spaces}

\Author{Hayato CHIBA}

\AuthorNameForHeading{H.~Chiba}

\Address{Institute of Mathematics for Industry, Kyushu University, Fukuoka, 819-0395, Japan}
\Email{\href{mailto:chiba@imi.kyushu-u.ac.jp}{chiba@imi.kyushu-u.ac.jp}}

\ArticleDates{Received September 17, 2015, in f\/inal form February 18, 2016; Published online February 23, 2016}	

\Abstract{The third, f\/ifth and sixth Painlev\'{e} equations are studied by means of
the weighted projective spaces $\mathbb C P^3(p,q,r,s)$ with suitable weights $(p,q,r,s)$
determined by the Newton polyhedrons of the equations.
Singular normal forms of the equations, symplectic atlases of the spaces of
initial conditions, Riccati solutions and Boutroux's coordinates are systematically studied
in a unif\/ied way with the aid of the orbifold structure of $\mathbb C P^3(p,q,r,s)$ and dynamical systems theory.}

\Keywords{Painlev\'{e} equations; weighted projective space}

\Classification{34M35; 34M45; 34M55}

\section{Introduction}

The f\/irst to sixth Painlev\'{e} equations written in Hamiltonian forms are given by
\begin{gather*}
(\text{P}_{J})\colon \  \frac{dx}{dz} = - \frac{\partial H_J}{\partial y}, \qquad
\frac{dy}{dz} = \frac{\partial H_J}{\partial x},
\end{gather*}
${J} = \text{I}, \text{II}, \text{IV}, \text{III}(D_8), \text{III}(D_7), \text{III}(D_6), \text{V}, \text{VI}$,
with the Hamiltonian functions def\/ined as
\begin{gather*}
H_{\text{I}}  =  \frac{1}{2}x^2 - 2y^3 - zy, \\
H_\text{II}  =  \frac{1}{2}x^2 - \frac{1}{2}y^4 - \frac{1}{2}zy^2 - \alpha y, \\
H_\text{IV}  =  -xy^2 + x^2y - 2xyz - 2\alpha x + 2\beta y, \\
zH_{\text{III} (D_8)}  =  x^2 y^2 - \frac{z}{2y} - \frac{1}{2}y, \\
zH_{\text{III} (D_7)}  =  x^2 y^2 + zx + y + \alpha xy, \\
zH_{\text{III} (D_6)}  =  x^2 y^2- xy^2 + zx + (\alpha + \beta ) xy - \alpha y, \\
zH_\text{V}  =  x(x+z)y (y-1) + \alpha _2 yz - \alpha _3 xy - \alpha _1 x(y-1), \\
z(z-1)H_\text{VI}  =  y(y-1)(y-z)x^2 + \alpha _2 (\alpha _1 + \alpha _2) (y-z) \\
\hphantom{z(z-1)H_\text{VI}  =}{} - \left( \alpha _4 (y-1)(y-z) + \alpha _3 y(y-z) + \alpha _0 y (y-1) \right)x,
\end{gather*}
where $\alpha_i, \beta \in \mathbb C$ are arbitrary parameters.
Parameters of~$H_\text{VI}$ satisfy the constraint $\alpha _0 + \alpha _1 + 2 \alpha _2 + \alpha _3 + \alpha _4 = 0$.
The third Painlev\'{e} equation is divided into three cases $\text{III} (D_8)$, $\text{III} (D_7)$, $\text{III} (D_6)$
due to the geometry of the spaces of initial conditions \cite{Ohy, Sak}.
See~\cite{Tsu} for the list of Hamiltonians and B\"{a}cklund transformations written in these coordinates,
although~$H_{\text{III} (D_8)}$ here is obtained by putting $x \mapsto x - 1/(2y)$ for that in~\cite{Tsu}.

Among these Hamiltonians, only $(\text{P}_\text{I})$, $(\text{P}_\text{II})$ and $(\text{P}_\text{IV})$
are polynomials with respect to both of the dependent variables $x$, $y$ and the independent variable $z$.
In~\cite{Chi2, Chi1}, $(\text{P}_\text{I})$,~$(\text{P}_\text{II})$ and~$(\text{P}_\text{IV})$
are studied by means of a weighted projective space $\mathbb C P^{3}(p,q,r,s)$,
whose weight $(p,q,r,s)$ is one of the invariants of the equation
determined by the Newton polyhedron.
In particular, the Painlev\'{e} property, the spaces of initial conditions and Kovalevskaya exponents
are investigated in detail.
The purpose in this paper is to extend the previous result to the third, f\/ifth and sixth Painlev\'{e}
equations, whose Newton polyhedrons are degenerate.

According to \cite{Chi2, Chi1}, let us recall the def\/inition of the Newton diagram of a polynomial
dif\/ferential system.
Consider the system of polynomial dif\/ferential equations
\begin{gather}
\frac{dx_i}{dz} = f_i(x_1, \dots ,x_m, z), \qquad i=1, \dots ,m.
\label{1-2}
\end{gather}
The exponent of a monomial $x_1^{\mu_1} \cdots x_m^{\mu_m}z^\eta$
included in the right hand side $f_i$ is def\/ined as
$(\mu_1, \dots , \mu_{i-1}, \mu_{i}-1 , \mu_{i+1}, \dots , \mu_{m}, \eta +1)$.
Each exponent specif\/ies a point of the integer lattice in $\mathbb R^{m+1}$.
The Newton polyhedron of (\ref{1-2}) is the convex hull of the union of the positive quadrants
$\mathbb R_+^{m+1}$ with vertices at the exponents of the monomials which appear in the system.
The Newton diagram of the system is the union of the compact faces of its Newton polyhedron.

We also consider the perturbative system
\begin{gather}
\frac{dx_i}{dz} = f_i(x_1 , \dots ,x_m,z) + g_i(x_1 , \dots ,x_m,z), \qquad i=1, \dots ,m,
\label{1-3}
\end{gather}
where $f_i$ and $g_i$ are polynomials.
We suppose that
\begin{enumerate}\itemsep=0pt
\item[(A1)] the Newton polyhedron of the truncated system~(\ref{1-2}) has only one compact face
and all exponents of monomials included in~(\ref{1-2}) lie on the face.
\end{enumerate}

In this case, there is a tuple of relatively prime \textit{positive} integers $(p_1, \dots ,p_m,r,s)$
and a~hyperplane $p_1x_1 + \cdots + p_mx_m + rz = s$ in $\mathbb R^{m+1}$ such that all exponents lie on the plane;
i.e., any monomials $x_1^{\mu_1} \cdots x_m^{\mu_m}z^\eta$ included in $f_i$ satisfy
\begin{gather*}
p_1 \mu_1 + \cdots + p_i(\mu_i-1) + \cdots + p_m \mu_m + r(\eta + 1) = s.
\end{gather*}
In \cite{Chi2}, we further suppose that $s-r = 1$ though it is not so essential.
To regard $g_i$ as a~perturbation, we suppose that
\begin{enumerate}\itemsep=0pt
\item[(A2)] any monomials $x_1^{\mu_1} \cdots x_m^{\mu_m}z^\eta$ included in~$g_i$ satisfy
\begin{gather*}
p_1\mu_1 + \cdots + p_i (\mu_i - 1) +\cdots + p_m\mu_m + r(\eta + 1) < s
\end{gather*}
(this implies that the exponents of $g_i$ lie on the lower side of the hyperplane).
\end{enumerate}

Due to the property of the Newton diagram, it is easy to verify that
the truncated system~(\ref{1-2}) is invariant under the $\mathbb Z_s$-action given by
\begin{gather}
(x_1, \dots ,x_m,z) \mapsto \big(\omega ^{p_1}x_1, \dots ,\omega ^{p_m}x_m, \omega ^{r}z\big), \qquad \omega := e^{2\pi i/s}.
\label{1-6}
\end{gather}
We require the same for equation~(\ref{1-3}):
\begin{enumerate}\itemsep=0pt
\item[(A3)]
Equation~(\ref{1-3}) is invariant under the $\mathbb Z_s$-action (\ref{1-6}).
\end{enumerate}

A tuple of positive integers $(p_1, \dots , p_m, r, s)$ is called the weight of the system~(\ref{1-3}).
It is known that there is a one-to-one correspondence between nondegenerate Newton diagrams and toric varieties.
If exponents lie on the unique plane $p_1 x_1 + \cdots + p_m x_m + rx = s$ (assumption~(A1)),
then the associated toric variety is the weighted projective space $\mathbb C P^{m+1} (p_1, \dots ,p_m r, s)$,
which is an~$(m+1)$-dimensional orbifold, see Section~\ref{section2.1} for the def\/inition.

The f\/irst, second and fourth Painlev\'{e} equations satisfy the above assumptions.
For the f\/irst Painlev\'{e} equation $x' = 6y^2 + z$, $y' = x$, put $f = (6y^2 + z, x)$ and $g= (0,0)$.
The Newton diagram is determined by three points $(-1,2,1)$, $(-1,0,2)$ and $(1,-1,1)$.
They lie on the unique plane $3x+2y + 4z = 5$.
For the second Painlev\'{e} equation $x' = 2y^3 + yz + \alpha $, $y' = x$ with a~parameter~$\alpha $,
put $f = (2y^3 + yz, x)$ and $g= (\alpha ,0)$.
The Newton diagram is determined by points $(-1,3,1)$, $(-1,1,2)$, $(1,-1,1)$, which lie on the plane $2x + y+ 2z = 3$.
For the fourth Painlev\'{e} equation, put $f = (-x^2 + 2xy + 2xz, -y^2+2xy-2yz)$ and $g= (- 2\beta ,-2\alpha )$.
The Newton diagram is given by the unique face on the plane $x + y+ z = 2$
passing through the exponents $(1,0,1)$, $(0,1,1)$ and $(0,0,2)$.
Hence, the weighted projective spaces associated with them are given by
$\mathbb C P^3(3,2,4,5)$, $\mathbb C P^3(2,1,2,3)$ and $\mathbb C P^3(1,1,1,2)$, respectively.
Note that~$g$ consists of terms including arbitrary parameters~$\alpha$,~$\beta$ for these systems.

The orbifold $\mathbb C P^{m+1}(p_1, \dots , p_m,r,s)$ is regarded as a compactif\/ication of the phase space
$\mathbb C^{m+1} = \{ (x_1, \dots ,x_m, z)\}$ of the system~(\ref{1-3}).
In~\cite{Chi2, Chi1}, the system (\ref{1-3}), in particular the f\/irst, second and fourth Painlev\'{e}
equations, are studied with the aid of the geometry of $\mathbb C P^{m+1}(p_1, \dots$, $p_m,r,s)$.
In this paper, the third, f\/ifth and sixth Painlev\'{e} equations will be investigated,
for which the Newton polyhedrons are degenerate and do not satisfy (A1).

The third Painlev\'{e} equation of type $D_6$ is explicitly given by
\begin{gather}
(\text{P}_{\text{III} (D_6)})\colon \   \begin{cases}
zx' = -2 x^2y + 2 xy - (\alpha + \beta) x + \alpha, \\
zy' = 2 xy^2 - y^2 +z + (\alpha + \beta ) y .
\end{cases}
\label{1-7}
\end{gather}
Put $f = (-2 x^2y + 2 xy, 2 xy^2 - y^2 + z)$
and $g = (- (\alpha +\beta) x + \alpha, (\alpha +\beta) y)$.
Exponents of $f$ are given by $(1,1,0)$, $(0,1,0)$, $(0,-1,1)$.
The Newton polyhedron generated by them has no compact faces because
the positive quadrant~$\mathbb R_+^{3}$ with a vertex at~$(0,1,0)$ includes~$(1,1,0)$;
the Newton diagram is empty.
Nevertheless, these three points lie on the plane $y + 2 z = 1$.
Hence, we def\/ine the weight of~$(\text{P}_{\text{III} (D_6)})$ by~$(0,1,2,1)$
and use the weighted projective space $\mathbb C P^{3}(0,1,2,1)$, which is \textit{not} compact
because of the nonpositive weight.

The third Painlev\'{e} equation of type $D_7$ is given by
\begin{gather*}
(\text{P}_{\text{III} (D_7)})\colon \   \begin{cases}
zx' = -2 x^2y -1 - \alpha x, \\
zy' = 2 xy^2 +z + \alpha y .
\end{cases}
\end{gather*}
Put $f = (-2 x^2y -1, 2 xy^2 +z)$
and $g = (- \alpha x, \alpha y)$.
Exponents of $f$ are given by $(1,1,0), (-1,0,0)$ and $(0,-1,1)$.
The Newton diagram is again empty, however, these three points lie on the plane $-x + 2y + 3z = 1$.
Thus we def\/ine the weight of $(\text{P}_{\text{III} (D_7)})$ by $(-1,2,3,1)$
and use the weighted projective space $\mathbb C P^{3}(-1,2,3,1)$.

The third Painlev\'{e} equation of type $D_8$ is given by
\begin{gather*}
(\text{P}_{\text{III} (D_8)})\colon \   \begin{cases}
\displaystyle zx' = \frac{1}{2} -2 x^2y - \frac{z}{2y^2}, \\
  zy' = 2 xy^2 .
\end{cases}
\end{gather*}
There are no parameters and put $g = (0,0)$.
Exponents of the system are $(1,1,0)$, $(-1,0,0)$ and $(-1,-2,1)$.
The Newton polyhedron is degenerate, however, these three points lie on the plane $-x + 2y + 4z = 1$.
Thus we def\/ine the weight of $(\text{P}_{\text{III} (D_8)})$ to be~$(-1,2,4,1)$
and use the weighted projective space $\mathbb C P^{3}(-1,2,4,1)$.

The f\/ifth Painlev\'{e} equation is given by
\begin{gather*}
(\text{P}_{\text{V}})\colon \  \begin{cases}
  zx' = -2 x^2y +x^2+xz - 2xyz + (\alpha _1 + \alpha _3) x - \alpha _2 z,  \\
  zy' = 2 xy^2 - 2xy - yz + y^2z - (\alpha _1 + \alpha_3 ) y + \alpha _1.
\end{cases}
\end{gather*}
Put $f = (-2 x^2y +x^2+xz - 2xyz, 2 xy^2 - 2xy - yz + y^2z)$ and
$g= ((\alpha _1 + \alpha _3) x - \alpha _2 z, - (\alpha _1 + \alpha_3 ) y + \alpha _1)$.
Exponents of~$f$ are $(1,1,0)$, $(1,0,0)$, $(0,0,1)$ and $(0,1,1)$.
Although there are four exponents, they lie on the unique plane $x + z = 1$.
Thus we def\/ine the weight of $(\text{P}_{\text{V} })$ by $(1,0,1,1)$
and use the weighted projective space~$\mathbb C P^{3}(1,0,1,1)$.

The sixth Painlev\'{e} equation is given by
\begin{gather*}
(\text{P}_{\text{VI}})\colon \    \begin{cases}
  z(z-1)x' = -3 x^2y^2 +2 x^2y + 2x^2yz - x^2z + g_1 ,  \\
  z(z-1)y' = 2 xy^3 - 2xy^2 - 2 xy^2z + 2 xyz + g_2,
\end{cases}
\end{gather*}
where $(g_1, g_2)$ consists of terms including parameters.
Exponents of the other terms are given by $(1,2,0)$, $(1,1,0)$, $(1,1,1)$ and $(1,0,1)$,
which lie on the unique plane $x = 1$.
Thus we def\/ine the weight of $(\text{P}_{\text{VI} })$ as $(1,0,0,1)$
and use the weighted projective space $\mathbb C P^{3}(1,0,0,1)$.

\begin{table}[h]
\centering
\begin{tabular}{|c||c|c|c|c|c|c|}
\hline
 & $(p,q,r,s)$ & $\deg  (H)$ & $\kappa$ & $\lambda _1$, $\lambda _2$, $\lambda _3$ & $l$ & $c$ \\ \hline \hline
$\text{P}_\text{I}$ & $(3,2,4,5)$ & 6 & 6 & $6,4,5$ & 1 & 2 \\ \hline
$\text{P}_\text{II}$ & $(2,1,2,3)$ & 4 & 4 & $4,2,3$ & 2 & 3 \\ \hline
$\text{P}_\text{IV}$ & $(1,1,1,2)$ & 3 & 3 & $3,1,2$ & 3 & 4 \\ \hline
$\text{P}_{\text{III} (D_8)}$ & $(-1,2,4,1)$ & 2 & 2 & $2,4,1$ & 2 & 3 \\ \hline
$\text{P}_{\text{III} (D_7)}$ & $(-1,2,3,1)$ & 2 & 2 & $2,3,1$ & 2 & 3 \\ \hline
$\text{P}_{\text{III} (D_6)}$ & $(0,1,2,1)$ & 2 & 2 & $2,2,1$ & 3 & 4 \\ \hline
$\text{P}_\text{V}$ & $(1,0,1,1)$ & 2 & 2 & $2,1,1$ & 4 & 5 \\ \hline
$\text{P}_\text{VI}$ & $(1,0,0,1)$ & 2 & 2 & $2,0,1$ & 5 & 6 \\ \hline
\end{tabular}

\caption{$\deg  (H)$ denotes the weighted degree of the Hamiltonian function with
respect to the weight $\deg (x) = p$, $\deg (y) = q$, $\deg (z) = r$.
$\kappa$ denotes the Kovalevskaya exponent def\/ined in Section~\ref{section2.2}.
$(\lambda _1, \lambda _2, \lambda _3)$ is the weight for the weighted blow-up, which
coincides with the eigenvalues of the Jacobi matrix at the singularity.
In~\cite{Chi2}, it is proved that $\deg  (H) = \kappa = \lambda _1$
and $r = \lambda _2$, $s = \lambda _3$.
$l$~gives the number of types of Laurent series solutions given in Section~\ref{section2.2},
and~$c$ is the number of local charts for the space of initial conditions.
We always have~$c = l+1$.}\label{table1}
\end{table}

In this paper, the third, f\/ifth and sixth Painlev\'{e} equations are studied
with the aid of the weighted projective spaces $\mathbb C P^3(p,q,r,s)$ and dynamical systems theory.
The weighted projective space is decomposed into the disjoint sum $\mathbb C P^3(p,q,r,s) \simeq \mathbb C P^2(p,q,r) \cup \mathbb C^3$.
This implies that the natural phase space $\mathbb C^3 = \{ (x,y,z)\}$ of the Painlev\'{e} equations
is embedded in $\mathbb C P^3(p,q,r,s)$ and the two-dimensional manifold $\mathbb C P^2(p,q,r)$ is attached at inf\/inity.
We regard the Painlev\'{e} equation as a~three-dimensional autonomous vector f\/ield def\/ined on $\mathbb C P^3(p,q,r,s)$.
Then, the vector f\/ield has several f\/ixed points on the inf\/inity set $\mathbb C P^2(p,q,r)$.
These f\/ixed points describe the asymptotic behavior of solutions.
Some of these f\/ixed points correspond to movable singularities, and the other correspond to the irregular singular point.

\looseness=-1
The dynamical systems theory is applied to these f\/ixed points to investigate the Painlev\'{e} equations.
The singular normal form of the Painlev\'{e} equation~\cite{Chi1, CosCos},
which is a local integrable system around
a movable singularity, is obtained by applying the normal form theory around f\/ixed points.
The space of initial conditions and its symplectic atlas are constructed by the weighted blow-up
at these f\/ixed points.
The weight for the weighted blow-up, which is also an invariant of the Painlev\'{e} equation related to
the Kovalevskaya exponent \cite{Chi2, Chi1}, is determined by eigenvalues of the Jacobi matrix
of the vector f\/ield at the f\/ixed points.
It is known that the Painlev\'{e} equations are reduced to the Riccati equations
when the parameters take certain specif\/ic values.
Such Riccati solutions are characterized as a center (un)stable manifold at the f\/ixed point on $\mathbb C P^3(p,q,r,s)$.
Although some of these results are well known for experts, our new approach based on the weighted projective space
and dynamical systems theory provides a~systematic way to investigate them.
From our analysis, it turns out that the weights and the Kovalevskaya exponents are important invariants
of the Painlev\'{e} equations.
In particular, the Painlev\'{e} equations may be classif\/ied by these invariants, which will be reported
in a forthcoming paper.

Our method will be explained in detail for the third Painlev\'{e} equation of type $D_6$ in Section~\ref{section3}.
Since the strategy for the other Painlev\'{e} equations $(\text{P}_{\text{III}(D_7)})$,
$(\text{P}_{\text{III}(D_8)})$, $(\text{P}_{\text{V}})$ and $(\text{P}_{\text{VI}})$ is
completely the same as that for $(\text{P}_{\text{III}(D_6)})$, we only show a sketch and several formulae
for them after Section~\ref{section4}.
See \cite{Chi1} for $(\text{P}_{\text{I}})$, $(\text{P}_{\text{II}})$ and $(\text{P}_{\text{IV}})$.

\section{Settings}\label{section2}

\subsection{Weighted projective spaces}\label{section2.1}

For a tuple of integers $(p_1, \dots ,p_m, r, s)$,
consider the weighted $\mathbb C^*$-action on $\mathbb C^{m+2}$ def\/ined by
\begin{gather*}
(x_1, \dots , x_m, z,\varepsilon ) \mapsto
\big(\lambda ^{p_1}x_1 , \dots ,\lambda ^{p_m} x_m, \lambda ^rz, \lambda ^s \varepsilon \big), \qquad
\lambda \in \mathbb C^* := \mathbb C \backslash \{ 0\}.
\end{gather*}
The quotient space
\begin{gather*}
\mathbb C P^{m+1}(p_1, \dots , p_m,r,s) := \mathbb C^{m+2}\backslash \{ 0\} /\mathbb C^*
\end{gather*}
gives an $(m+1)$-dimensional orbifold called the weighted projective space.
In this paper, we only use a three-dimensional space.
When $(p,q,r,s)$ are positive integers, the orbifold structure of $\mathbb C P^3(p,q,r,s)$ is obtained as follows:

The space $\mathbb C P^3(p,q,r,s)$ is def\/ined by the equivalence relation on $\mathbb C^4\backslash \{ 0\}$
\begin{gather*}
(x,y,z, \varepsilon ) \sim \big(\lambda^p x, \lambda ^qy, \lambda ^rz, \lambda ^s \varepsilon \big).
\end{gather*}

(i) When $x\neq 0$,
\begin{gather*}
(x,y,z,\varepsilon ) \sim \big(1,  x^{-q/p}y,   x^{-r/p}z,  x^{-s/p}\varepsilon \big)
=:(1, Y_1, Z_1, \varepsilon _1).
\end{gather*}
Due to the choice of the branch of $x^{1/p}$, we also obtain
\begin{gather*}
(Y_1, Z_1, \varepsilon _1) \sim
\big(e^{-2q\pi i /p}Y_1, e^{-2r\pi i /p}Z_1, e^{-2s\pi i /p}\varepsilon _1\big),
\end{gather*}
by putting $x \mapsto e^{2\pi i }x$.
This implies that the subset of $\mathbb C P^3(p,q,r,s)$ such that $x\neq 0$ is homeomorphic to $\mathbb C^3 / \mathbb Z_p$,
where the $\mathbb Z_p$-action is def\/ined as above.

(ii) When $y\neq 0$,
\begin{gather*}
(x,y,z,\varepsilon ) \sim \big(y^{-p/q}x,  1 ,  y^{-r/q}z,  y^{-s/q}\varepsilon \big)
=:(X_2, 1, Z_2, \varepsilon _2).
\end{gather*}
Because of the choice of the branch of~$y^{1/q}$, we obtain
\begin{gather*}
(X_2, Z_2, \varepsilon _2) \sim \big(e^{-2p\pi i/q}X_2, e^{-2r\pi i/q}Z_2, e^{-2s\pi i/q} \varepsilon _2\big).
\end{gather*}
Hence, the subset of $\mathbb C P^3(p,q,r,s)$ with $y\neq 0$ is homeomorphic to~$\mathbb C^3 / \mathbb Z_q$.

(iii) When $z\neq 0$,
\begin{gather*}
(x,y,z,\varepsilon ) \sim \big(z^{-p/r}x,  z^{-q/r}y ,  1,  z^{-s/r}\varepsilon \big)
=: (X_3, Y_3, 1, \varepsilon _3).
\end{gather*}
Similarly, the subset $\{ z \neq 0\} \subset \mathbb C P^3(p,q,r,s)$ is homeomorphic to~$\mathbb C^3 / \mathbb Z_r$.

(iv) When $\varepsilon \neq 0$,{\samepage
\begin{gather*}
(x,y,z,\varepsilon ) \sim \big(\varepsilon ^{-p/s}x,  \varepsilon ^{-q/s}y ,  \varepsilon ^{-r/s}z ,  1\big)
=:(X_4, Y_4, Z_4, 1).
\end{gather*}
The subset $\{ \varepsilon \neq 0\} \subset \mathbb C P^3(p,q,r,s)$ is homeomorphic to~$\mathbb C^3 / \mathbb Z_s$.}

This proves that the orbifold structure of~$\mathbb C P^3(p,q,r,s)$ is given by
\begin{gather*}
\mathbb C P^3(p,q,r,s) = \mathbb C^3/\mathbb Z_p  \cup   \mathbb C^3/\mathbb Z_q  \cup   \mathbb C^3/\mathbb Z_r   \cup   \mathbb C^3/\mathbb Z_s.
\end{gather*}
The local charts $(Y_1, Z_1, \varepsilon _1)$, $(X_2, Z_2, \varepsilon _2)$,
$(X_3, Y_3, \varepsilon _3)$ and $(X_4, Y_4, Z_4)$ def\/ined above are called inhomogeneous coordinates
as the usual projective space.
Note that they give coordinates on the lift~$\mathbb C^3$, not on the quotient $\mathbb C^3 / \mathbb Z_i$ $(i=p,q,r,s)$.
Therefore, any equations written in these inhomogeneous coordinates should be invariant under
the corresponding~$\mathbb Z_i$ actions.

In what follows, we use the notation $(x,y,z)$ for the fourth local chart instead of~$(X_4, Y_4, Z_4)$
because the Painlev\'{e} equation will be given on this chart.
The transformations between inhomogeneous coordinates are give by
\begin{gather}
x =   \varepsilon _1^{-p/s} =   X_2\varepsilon _2^{-p/s} =  X_3\varepsilon _3^{-p/s}, \qquad
y =   Y_1\varepsilon _1^{-q/s} =   \varepsilon _2^{-q/s}=  Y_3\varepsilon _3^{-q/s},\nonumber\\
z =   Z_1\varepsilon_1 ^{-r/s} =   Z_2\varepsilon _2^{-r/s}=  \varepsilon _3^{-r/s}.\label{2-3}
\end{gather}
The same transformation rule holds even if $p$, $q$, $r$, $s$ include negative integers.
If there are $0$ among them, for example if $p=0$, then we have
$\mathbb C P^3(0,q,r,s) \simeq \mathbb C \times \mathbb C P^2(q,r,s)$.
$\mathbb C P^3(p,q,r,s)$ is compact if and only if all $p$, $q$, $r$, $s$ are positive.

\subsection{Laurent series solutions and Kovalevskaya exponents}\label{section2.2}

To construct the space of initial conditions, we need the expressions of the Laurent series of solutions.
Let $(p,q,r,s)$ be the weight of a given system determined by the Newton polyhedron.
Suppose that the system has a Laurent series solution of the form
\begin{gather*}
 x = \sum^\infty_{n=0} A_n (z-z_0)^{-p+n}, \qquad
 y = \sum^\infty_{n=0} B_n (z-z_0)^{-q+n},
\end{gather*}
where $(A_0, B_0) \neq (0,0)$ and $z_0$ is a movable pole.
Such a Laurent series solution is called \textit{regular}.
A Laurent series solution is called \textit{exceptional} if it is not expressed in this form;
i.e., $(A_0, B_0) = (0,0)$ or the order of a pole of either $x$ or $y$ is larger than~$p$ or $q$, respectively.
If a regular Laurent series represents a general solution of the system,
it includes an arbitrary parameter, which depends on initial conditions, other than~$z_0$.
The smallest integer~$\kappa$ such that $(A_\kappa, B_\kappa)$ includes an arbitrary parameter is called
the Kovalevskaya exponent.
In~\cite{Chi2}, it is proved that the Kovalevskaya exponent
of the regular Laurent series solution is invariant under a certain
class of coordinates transformations including the automorphism group of $\mathbb C P^3(p,q,r,s)$.
For the f\/irst, second and fourth Painlev\'{e} equations, all Laurent series solutions are regular
because they satisfy the assumptions~(A1) and~(A2).
The third, f\/ifth and sixth Painlev\'{e} equations have exceptional Laurent series solutions,
however, they can be converted into the regular series by the B\"{a}cklund transformations.
Hence, the Kovalevskaya exponents $\kappa$ of exceptional Laurent series solutions are well-def\/ined
and given as in Table~\ref{table1}.
In what follows, denote $T := z-z_0$.

($\text{P}_{\text{III} (D_6)}$):
The third Painlev\'{e} equation of type $D_6$ has three types of Laurent series solutions given by
\begin{alignat*}{3}
& \mathrm{(i)} \quad & &
 x = 1 + \frac{\beta}{z_0}\cdot T + A_2\cdot T^2 + O\big(T^3\big), &\\
&&&  y = -z_0\cdot T^{-1} + \frac{1}{2}(-1-\alpha +\beta) + B_2 \cdot T + O\big(T^2\big),& \\
& \mathrm{(ii)} \quad & &
x = 0 \cdot T^0 - \frac{\alpha}{z_0}\cdot T + A_2\cdot T^2 + O\big(T^3\big), & \\
&&& y = z_0\cdot T^{-1} + \frac{1}{2}(1-\alpha + \beta) + B_2 \cdot T + O\big(T^2\big),&  \\
& \mathrm{(iii)} \quad & &
  x = -z_0 \cdot T^{-2} + 0\cdot T^{-1} + A_2 + O(T),& \\
&&&  y = -T + \frac{-2+\alpha + \beta}{2z_0} \cdot T^2 + B_2 \cdot T^3 + O\big(T^4\big), &
\end{alignat*}
where $A_2$ is an arbitrary constant and $B_2$ is a certain function of $A_2$.
Since $(p,q) = (0,1)$, the f\/irst two series are regular, while the last one is exceptional.
The Kovalevskaya exponents of all series are $\kappa = 2$.

($\text{P}_{\text{III} (D_7)}$):
The third Painlev\'{e} equation of type $D_7$ has two types of Laurent series solutions given by
\begin{alignat*}{3}
& \mathrm{(i)}\quad & &
  x = \frac{1}{z_0} \cdot T + \frac{\alpha -1}{2 z_0^2}\cdot T^2 + A_2\cdot T^3 + O\big(T^4\big), & \\
&&&  y = -z_0^2 \cdot T^{-2} -z_0 \cdot T^{-1} + B_2 + O(T), & \\
& \mathrm{(ii)} & & x = -z_0 \cdot T^{-2} + 0 \cdot T^{-1} + A_2 + O(T), & \\
&&&  y = -T^{1} + \frac{\alpha -2}{2 z_0} \cdot T^2 + B_2 \cdot T^3 + O\big(T^4\big),  &
\end{alignat*}
where $A_2$ is an arbitrary constant and~$B_2$ is a certain function of~$A_2$.
Since $(p,q) = (-1,2)$, the former series is regular, while the latter one is exceptional.
The Kovalevskaya exponents of both series are $\kappa = 2$.

($\text{P}_{\text{III} (D_8)}$):
The third Painlev\'{e} equation of type $D_8$ has two types of Laurent series solutions given by
\begin{alignat*}{3}
& \mathrm{(i)} \quad & &
  x = -\frac{1}{2 z_0} \cdot T + \frac{1}{4 z_0^2}\cdot T^2 + \frac{2B_2-1}{4z_0^2} \cdot T^3 + O\big(T^4\big),& \\
&&& y = 2z_0^2 \cdot T^{-2} +2z_0 \cdot T^{-1} + B_2 + O(T), & \\
& \mathrm{(ii)} \quad & &
  x = 2z_0^2 \cdot T^{-3} + 2z_0 \cdot T^{-2} + 0 \cdot T^{-1} + O(1), & \\
&&&  y = \frac{1}{2z_0}T^{2} + 0\cdot T^3 + B_2 \cdot T^4 + O\big(T^5\big), &
\end{alignat*}
where $B_2$ is an arbitrary constant.
Since $(p,q) = (-1,2)$, the former series is regular, while the latter one is exceptional.
The Kovalevskaya exponents of both series are $\kappa = 2$.

($\text{P}_{\text{V}}$):
The f\/ifth Painlev\'{e} equation has four types of Laurent series solutions given by
\begin{alignat*}{3}
& \mathrm{(i)} \quad & &
 x = z_0 \cdot T^{-1} + \frac{1}{2}(1-z_0+\alpha_1-\alpha _3) + A_2\cdot T + O\big(T^2\big),&  \\
&&& y = 1 + \frac{\alpha _3}{z_0} \cdot T + B_2 \cdot T^2 + O\big(T^3\big), & \\
& \mathrm{(ii)} \quad & &
 x = -z_0 \cdot T^{-1} + \frac{1}{2}(-1-z_0+\alpha _1-\alpha _3) + A_2\cdot T + O\big(T^2\big),& \\
&&&  y = 0\cdot T^0 - \frac{\alpha _1}{z_0} \cdot T + B_2 \cdot T^2 + O\big(T^3\big), & \\
& \mathrm{(iii)} \quad & &
 x = -z_0 + (\alpha _1 + \alpha _2 + \alpha _3-2) \cdot T + A_2\cdot T^2 + O\big(T^3\big), & \\
&&& y = T^{-1} + \frac{z_0 + \alpha _1+2 \alpha _2 + \alpha _3-2}{2z_0} + B_2 \cdot T + O\big(T^2\big),& \\
& \mathrm{(iv)} \quad & &
 x = 0 \cdot T^0 + \alpha _2 \cdot T + A_2\cdot T^2 + O\big(T^3\big), & \\
&&& y = -T^{-1} + \frac{z_0 + \alpha _1+2 \alpha _2 + \alpha _3}{2z_0} + B_2 \cdot T + O\big(T^2\big), &
\end{alignat*}
where $A_2$ is an arbitrary constant and $B_2$ is a certain function of $A_2$.
Since $(p,q) = (1,0)$, (i)~and~(ii) are regular, while~(iii) and~(iv) are exceptional.
The Kovalevskaya exponents of all series are $\kappa = 2$.

($\text{P}_{\text{VI}}$):
The sixth Painlev\'{e} equation has f\/ive types of Laurent series solutions given by
\begin{alignat*}{3}
& \mathrm{(i)} \quad & &
 x = T^{-1} +
 \frac{2 - 4z_0 + \alpha _0 - 2 z_0 \alpha _0 + z_0 \alpha _3 - \alpha _4 + z_0 \alpha _4}{2z_0(z_0-1)}
+ A_2\cdot T + O\big(T^2\big), & \\
&&& y = z_0 + (2+\alpha _0) \cdot T + B_2 \cdot T^2 + O\big(T^3\big), & \\
& \mathrm{(ii)} \quad & &   x = -z_0 \cdot T^{-1}
 + \frac{1 - z_0- \alpha _0 + 2 \alpha _3 - z_0 \alpha _3 - \alpha _4 + z_0 \alpha _4}{2(z_0-1)}
+ A_2\cdot T + O\big(T^2\big), & \\
&&& y = 1 -\frac{\alpha _3}{z_0} \cdot T + B_2 \cdot T^2 + O\big(T^3\big),& \\
& \mathrm{(iii)} \quad & &
  x = (z_0-1) \cdot T^{-1} + \frac{z_0 - \alpha _0 - z_0 \alpha _3 + \alpha _4 + z_0 \alpha _4}{2z_0}
+ A_2\cdot T + O\big(T^2\big),& \\
&&& y = 0 \cdot T^0 + \frac{\alpha _4}{z_0-1} \cdot T + B_2 \cdot T^2 + O\big(T^3\big), & \\
& \mathrm{(iv)} \quad & &
  x = -\frac{\alpha _1 (\alpha _1 + \alpha _2)}{z_0 (z_0 -1)} \cdot T + O\big(T^2\big),\qquad  y = \frac{z_0 (z_0-1)}{\alpha _1} \cdot T^{-1} + O(1), &\\
& \mathrm{(v)} \quad & &
 x = \frac{\alpha _1 \alpha _2}{z_0 (z_0 -1)} \cdot T + O\big(T^2\big),\qquad  y = -\frac{z_0 (z_0-1)}{\alpha _1} \cdot T^{-1} + O(1), &
\end{alignat*}
where $A_2$ is an arbitrary constant and $B_2$ is a certain function of $A_2$.
Since $(p,q) = (1,0)$, (i),~(ii) and~(iii) are regular, while~(iv) and~(v) are exceptional.
The Kovalevskaya exponents of all series are $\kappa = 2$.

For all Painlev\'{e} equations, the number of types of Laurent series solutions is
smaller than the number of local charts of the space of initial conditions by one, see Table~\ref{table1}.

\section[The third Painlev\'{e} equation of type $D_6$]{The third Painlev\'{e} equation of type $\boldsymbol{D_6}$}\label{section3}

\subsection[$\text{P}_{\text{III} (D_6)}$ on $\mathbb C P^3(0,1,2,1)$]{$\boldsymbol{\text{P}_{\text{III} (D_6)}}$ on $\boldsymbol{\mathbb C P^3(0,1,2,1)}$}\label{section3.1}

The orbifold structure of $\mathbb C P^3(0,1,2,1)$ is given by
\begin{gather*}
\mathbb C P^3(0,1,2,1)  =  \mathbb C \times \mathbb C P^2(1,2,1)
 =  \mathbb C \times \big( \mathbb C^2 \cup \mathbb C^2 / \mathbb Z_2 \cup \mathbb C^2\big)
 =  \mathbb C^3 \cup \big(\mathbb C \times \mathbb C^2/\mathbb Z_2 \big) \cup \mathbb C^3.
\end{gather*}
Thus, the space is covered by three inhomogeneous coordinates $(X_2, Z_2, \varepsilon _2),
(X_3, Y_3, \varepsilon _3)$ and $(x,y,z)$ related as
\begin{gather}
x = X_2 =X_3, \qquad
y = \varepsilon _2^{-1} = Y_3 \varepsilon _3^{-1}, \qquad
z = Z_2 \varepsilon _2^{-2} = \varepsilon _3^{-2}
\label{3-1}
\end{gather}
(the f\/irst local chart $(Y_1, Z_1, \varepsilon _1)$ does not appear because $p=0$).

We give the third Painlev\'{e} equation of type $D_6$ on the local chart $(x,y,z)$.
On the other local charts, $(\text{P}_{\text{III} (D_6)})$ is expressed as
\begin{gather*}
  \frac{dX_2}{d\varepsilon _2}
 = \frac{2X_2 - 2X_2^2 + \varepsilon _2 (\alpha - \alpha X_2 - \beta X_2)}
 {\varepsilon _2 (1 -2X_2 - Z_2 - \alpha \varepsilon _2 - \beta \varepsilon _2)}, \\
  \frac{dZ_2}{d\varepsilon _2} =
 \frac{2Z_2 - 4X_2Z_2 - 2Z_2^2 + \varepsilon _2Z_2 (1-2\alpha - 2\beta)}
 {\varepsilon _2 (1 -2X_2 - Z_2 - \alpha \varepsilon _2 - \beta \varepsilon _2)}, \\
 \frac{dX_3}{d\varepsilon _3} = -\frac{2}{\varepsilon _3^2}
 \left( 2X_3Y_3 - 2X_3^2Y_3 + \varepsilon _3 (\alpha - \alpha X_3 - \beta X_3) \right), \\
 \frac{dY_3}{d\varepsilon _3} = -\frac{2}{\varepsilon _3^2}
 \left( 1 - Y_3^2 + 2X_3Y_3^2 + \varepsilon _3Y_3 \left(-\frac{1}{2} + \alpha + \beta\right) \right).
\end{gather*}
In order to apply dynamical systems theory later,
it is convenient to rewrite them as $3$-di\-men\-sio\-nal autonomous vector f\/ields of the form
\begin{gather}
  \dot{X}_2 = 2X_2 - 2X_2^2 + \varepsilon _2 (\alpha - \alpha X_2 - \beta X_2), \nonumber\\
  \dot{Z}_2 = 2Z_2 - 4X_2Z_2 - 2Z_2^2 + \varepsilon _2Z_2 (1-2\alpha - 2\beta), \nonumber\\
  \dot{\varepsilon }_2 =\varepsilon _2 (1 -2X_2 - Z_2 - \alpha \varepsilon _2 - \beta\varepsilon _2),
\label{3-4}
\end{gather}
and
\begin{gather}
  \dot{X}_3 = 2X_3Y_3 - 2X_3^2Y_3 + \varepsilon _3 (\alpha - \alpha X_3 - \beta X_3),\nonumber \\
  \dot{Y}_3 = 1 - Y_3^2 + 2X_3Y_3^2 + \varepsilon _3Y_3 (-\frac{1}{2} + \alpha + \beta), \nonumber\\
  \dot{\varepsilon }_3 = -\frac{\varepsilon _3^2}{2},
\label{3-5}
\end{gather}
where $(\dot{\,\,}) = d/dt$ and $t\in \mathbb C$ parameterizes each integral curve.

We also have the decomposition
\begin{gather*}
\mathbb C P^3(0,1,2,1)  =  \mathbb C P^2(0,1,2) \cup \mathbb C^3 \qquad (\text{disjoint}).
\end{gather*}
Since $\mathbb C P^2(0,1,2) = \mathbb C \times \mathbb C P^1(1,2)$ and $\mathbb C P^1(1,2)$ is the usual projective line,
\begin{gather*}
\mathbb C P^3(0,1,2,1) =  \big(\mathbb C \times \mathbb C P^1\big) \cup \mathbb C^3,
\end{gather*}
where $\mathbb C^3 = \{ (x,y,z)\}$ and $\mathbb C \times \mathbb C P^1 = \{ (X_2, Z_2, 0) \} \cup \{ (X_3, Y_3, 0) \}$
in coordinates.
This implies that the set $\mathbb C \times \mathbb C P^1$ is attached at inf\/inity of the natural phase space
$\mathbb C^3 = \{ (x,y,z)\}$ of $(\text{P}_{\text{III} (D_6)})$, and the asymptotic behavior of solutions
can be studied by the limit $\varepsilon _2 \to 0$ or $\varepsilon _3 \to 0$.

The vector f\/ields~(\ref{3-4}) and~(\ref{3-5}) have exactly four f\/ixed points on the inf\/inity set
$\{ \varepsilon _2 = 0\} \cup \{ \varepsilon _3 =0 \}$ given by
\begin{gather*}
(X_2, Z_2, \varepsilon _2) = (0,0,0),\ (1,0,0),\  (0,1,0), \  (1,-1,0).
\end{gather*}
The Jacobi matrices of (\ref{3-4}) at the f\/ixed points are
\begin{gather}
\left(
\begin{matrix}
2 & 0 & \alpha \\
0 & 2 & 0 \\
0 & 0 & 1
\end{matrix}
\right), \quad -\left(
\begin{matrix}
2 & 0 & \beta \\
0 & 2 & 0 \\
0 & 0 & 1
\end{matrix}
\right), \quad
\left(
\begin{matrix}
2 & 0 & \alpha \\
-4 & -2 & \gamma \\
0 & 0 & 0
\end{matrix}
\right), \quad
-\left(
\begin{matrix}
2 & 0 & \beta \\
-4 & -2 & \gamma \\
0 & 0 & 0
\end{matrix}
\right),
\label{3-8}
\end{gather}
respectively, where $\gamma = 1-2\alpha -2\beta$.
The latter two points, for which $Z_2 \neq 0 \Rightarrow z = \infty$, correspond to the
irregular singular point.
Since the Jacobi matrix has a zero eigenvalue, there exists a one-dimensional center manifold
at each f\/ixed point.
The asymptotic expansion of the center manifold~\cite{Cho} yields the asymptotic expansion
of $(x(z), y(z))$ as~$|z| \to \infty$.

On the other hand, the former two f\/ixed points correspond to movable poles.
Indeed, it is easy to verify by using (\ref{3-1}) that the regular Laurent series solutions~(ii) and~(i) given in Section~\ref{section2.2} converge to the points $(X_2, Z_2, \varepsilon _2) = (0,0,0)$ and $(1,0,0)$,
respectively, as $z \to z_0$.
The exceptional Laurent series solution (iii) does not converge to some point on $\mathbb C P^3(0,1,2,1)$
as $z \to z_0$; the space $\mathbb C P^3(0,1,2,1)$ is not compact.

To treat the Laurent series (iii), we use the B\"{a}cklund transformations.
See \cite{Tsu} for the complete list of the B\"{a}cklund transformations of two-dimensional Painlev\'{e} equations.
It is known that the transformation groups of the Painlev\'{e} equations are
isomorphic to the extended af\/f\/ine Weyl groups.
For a classical root system $R$, the af\/f\/ine Weyl group and the extended af\/f\/ine Weyl group are denoted by
$W(R^{(1)})$ and $\widetilde{W}(R^{(1)})$, respectively.
Let $G = \operatorname{Aut} (R^{(1)})$ be the Dynkin automorphism group of the extended Dynkin diagram.
We have $\widetilde{W}(R^{(1)}) \cong G \ltimes W(R^{(1)})$.
For the third Painlev\'{e} equation of type $D_6$, $R = 2A_1$ and
\begin{gather*}
  \widetilde{W}\big((2A_1)^{(1)}\big) \cong G \ltimes W\big((2A_1)^{(1)}\big), \qquad
  W\big((2A_1)^{(1)}\big) = \langle s_0, s_1, s_0', s_1' \rangle, \\
  G = \operatorname{Aut} \big(D_6^{(1)}\big) = \operatorname{Aut} \big((2A_1)^{(1)}\big) = \langle \pi_1, \pi_2, \sigma _1 \rangle.
\end{gather*}
The action of each element is given in Table~\ref{table2}, where
$f_0 = x + (\alpha + \beta -1)/y + z/y^2$, $f_1 = \beta y + (x-1)y^2 + z$, $f_2 = \alpha y + xy^2 + z$ and $f_3 = 2xy + \alpha + \beta -1$.
\begin{table}[h]
\centering
\begin{tabular}{|c||c|c|c|c|c|}
\hline
 & $\alpha$ & $\beta$ & $y$ & $x$ & $z$ \\ \hline \hline
& & & & & \\[-0.4cm]
$s_0$ & \!\!\! $2-\alpha $ \!\!\! & $\beta$ &
 $\displaystyle y + \frac{1-\alpha}{f_0-1}$ &
 $\displaystyle x - \frac{(1-\alpha) (f_3-2y)}{f_1}-\frac{(1-\alpha)^2 z}{f_1^2}$ & $z$ \\[0.3cm] \hline
& & & & & \\[-0.4cm]
$s_1$ & $-\alpha$ & $\beta$ & $\displaystyle y+\frac{\alpha}{x}$ & $x$ & $z$ \\[0.3cm] \hline
& & & & & \\[-0.4cm]
$s_0'$ &$\alpha $ & \!\!\!$2-\beta$\!\!\! &
 $\displaystyle y + \frac{1-\beta}{f_0}$ &
 $\displaystyle x - \frac{(1-\beta) f_3}{f_2}-\frac{(1-\beta)^2 z}{f_2^2}$ & $z$ \\[0.3cm] \hline
& & & & & \\[-0.4cm]
$s_1'$ &$\alpha$ & $-\beta$ & $\displaystyle y + \frac{\beta}{x-1}$ & $x$ & $z$ \\[0.3cm] \hline
& & & & & \\[-0.4cm]
$\pi_1$ &\!\!\!$1-\alpha$\!\!\! & $\beta$ & $\displaystyle -\frac{z}{y}$ &
 $\displaystyle \frac{y}{z} (xy-y+\beta) + 1$ & $z$ \\[0.3cm] \hline
& & & & & \\[-0.4cm]
$\pi_2$ &$\alpha$ & \!\!$1-\beta$\!\! & $\displaystyle \frac{z}{y}$ &
 $\displaystyle -\frac{y}{z} (xy + \alpha)$ & $z$ \\[0.3cm] \hline
$\sigma _1$ &$\beta$ & $\alpha$ & $-y$ & $1-x$ & $-z$ \\ \hline
\end{tabular}
\caption{The action of the extended af\/f\/ine Weyl group for $(\text{P}_{\text{III} (D_6)})$.}\label{table2}
\end{table}

Let us consider another space $\mathbb C P^3(0,1,2,1)$ with inhomogeneous coordinates
$(\widetilde{X}_2, \widetilde{Z}_2, \widetilde{\varepsilon }_2)$, $(\widetilde{X}_3, \widetilde{Y}_3, \widetilde{\varepsilon }_3)$
and $(\widetilde{x}, \widetilde{y}, \widetilde{z})$ satisfying the same formula as (\ref{3-1}).
We glue two copies of $\mathbb C P^3(0,1,2,1)$ by the transformation $\pi_2$
\begin{gather}
(\widetilde{x}, \widetilde{y}, \widetilde{z}, \widetilde{\alpha }, \widetilde{\beta})
 =  \pi_2(x,y,z,\alpha ,\beta)
 =  \left( -\frac{y}{z}(xy + \alpha ) , \frac{z}{y},  z,  \alpha ,  1-\beta \right),
\label{3-9}
\end{gather}
which def\/ines a manifold denoted by $\mathcal{M}_1$.
Since $(\text{P}_{\text{III} (D_6)})$ is invariant under the action of~$\pi_2$,
the system written in the $(\widetilde{X}_2, \widetilde{Z}_2, \widetilde{\varepsilon }_2)$-chart
also satisf\/ies equation~(\ref{3-4}), in which $(\alpha , \beta)$ is replaced by $(\widetilde{\alpha }, \widetilde{\beta})
 = (\alpha , 1-\beta)$.
By the formula~(\ref{3-9}) or
\begin{gather*}
\widetilde{X}_2 = -\big(xy^2 + \alpha y\big)/z, \qquad
\widetilde{Z}_2 = y^2/z, \qquad
\widetilde{\varepsilon }_2 = y/z,
\end{gather*}
it is easy to show that the exceptional Laurent series (iii) is converted to the regular series (i)
in the $(\widetilde{x}, \widetilde{y}, \widetilde{z})$-chart,
and it approaches to the f\/ixed point $(\widetilde{X}_2, \widetilde{Z}_2, \widetilde{\varepsilon }_2) = (1,0,0)$
as $z\to z_0$.

We have proved that Laurent series solutions (i), (ii) and (iii) of $(\text{P}_{\text{III} (D_6)})$
converge to the f\/ixed points $(X_2, Z_2, \varepsilon _2) = (1,0,0), (0,0,0)$ and
$(\widetilde{X}_2, \widetilde{Z}_2, \widetilde{\varepsilon }_2) = (1,0,0)$ on $\mathcal{M}_1$, respectively, as $z\to z_0$.
Hence, the study of movable poles are reduced to the study of the f\/ixed points and
dynamical systems theory is applicable to investigate them.

As a simple application, we can prove the next theorem.

\begin{Theorem}
There exists a local analytic transformation $(X_2, Z_2, \varepsilon _2) \mapsto (u,v,w)$
defined near $(X_2, Z_2, \varepsilon _2) = (0,0,0)$ such that equation~\eqref{3-4} is transformed into the
linearized system
\begin{gather*}
\dot{u} = 2u + \alpha w, \qquad
\dot{v} = 2v, \qquad
\dot{w} = w.
\end{gather*}
Similarly, the vector field is locally linearized around $(X_2, Z_2, \varepsilon _2) = (1,0,0)$
and $(\widetilde{X}_2, \widetilde{Z}_2, \widetilde{\varepsilon }_2) = (1,0,0)$.
\end{Theorem}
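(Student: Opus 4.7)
The plan is to apply the Poincar\'{e}--Dulac analytic normal form theorem at each of the three f\/ixed points. By~(\ref{3-8}), the Jacobian has spectrum $(2,2,1)$ at $(X_2,Z_2,\varepsilon _2)=(0,0,0)$ and $(-2,-2,-1)$ at the other two points; in every case all eigenvalues lie strictly on one side of the imaginary axis, so we are in the Poincar\'{e} domain and the analytic vector f\/ield is analytically conjugate to its linear part augmented only by resonant monomials.

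The f\/irst step is to enumerate the resonances. A monomial $u^{m_1}v^{m_2}w^{m_3}$ with $|m|\geq 2$ is resonant for the $i$-th equation if{}f $\lambda _i=m_1\lambda _1+m_2\lambda _2+m_3\lambda _3$. For either spectrum $(\pm 2,\pm 2,\pm 1)$ the equation $\pm 2=\pm(2m_1+2m_2+m_3)$ forces $m=(0,0,2)$, as every other candidate gives $|m|=1$; the corresponding equation $\pm 1=\pm(2m_1+2m_2+m_3)$ has no solution with $|m|\geq 2$. Hence the only potentially resonant monomial is $w^2$ in $\dot u$ and $\dot v$, and there are no higher-degree resonances.

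The second step is a direct computation showing that these resonant coef\/f\/icients vanish. At $(0,0,0)$ set $U=X_2+\alpha \varepsilon _2$, $V=Z_2$, $W=\varepsilon _2$ so that the linear part becomes diagonal. Expanding~(\ref{3-4}) to degree two, the $W^2$ contributions in $\dot U=\dot X_2+\alpha \dot\varepsilon _2$ coming from $\dot X_2$ and from $\alpha \dot\varepsilon _2$ evaluate to $(\alpha \beta -\alpha ^2)W^2$ and $(\alpha ^2-\alpha \beta )W^2$ and cancel, while $\dot V=\dot Z_2$ has no $\varepsilon _2^2$ term at all. An analogous change $U=X_2-1+\beta \varepsilon _2$, $V=Z_2$, $W=\varepsilon _2$ at $(1,0,0)$ produces the same cancellation with $\alpha $ and $\beta $ interchanged, and the point $(\widetilde X_2,\widetilde Z_2,\widetilde \varepsilon _2)=(1,0,0)$ on $\mathcal{M}_1$ is handled identically after replacing $\beta $ by $1-\beta $ via~(\ref{3-9}).

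Since no resonant monomial of any degree remains, the Poincar\'{e}--Dulac algorithm produces an analytic conjugacy that successively kills every nonlinear term, leaving the purely linear system. Pulling this conjugacy back to coordinates whose linear leading part agrees with that of~(\ref{3-4}) yields the claimed form $\dot u=2u+\alpha w$, $\dot v=2v$, $\dot w=w$, with the evident variants at the other two points. The main obstacle is the degree-two cancellation: once it is verif\/ied the rest is standard Poincar\'{e} domain theory, but the cancellation itself appears to be a genuine feature of the Painlev\'{e} structure rather than a generic consequence of the eigenvalue conf\/iguration.
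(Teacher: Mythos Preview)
Your proof is correct and follows the same route as the paper, which merely invokes Poincar\'{e}'s linearization theorem and defers the details to~\cite{Chi2,Chi1}. In fact you supply the one point the paper leaves implicit: since the spectrum $(2,2,1)$ satisf\/ies the resonance $2=1+1$, Poincar\'{e}'s theorem by itself does not force linearizability, and the vanishing of the sole resonant coef\/f\/icient (the $w^2$~term in~$\dot u$ and~$\dot v$) must be checked---which you carry out correctly.
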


This result implies that $(\text{P}_{\text{III} (D_6)})$ is locally transformed to the integrable system
around each movable singularity.
A proof is a straightforward application of Poincar\'{e}'s linearization theorem~\cite{Cho}
in dynamical systems theory.
See~\cite{Chi1} for the detail, in which a similar result is proved for the f\/irst, second and fourth
Painlev\'{e} equations, and also~\cite{Chi2}, in which it is proved that any dif\/ferential equations
having the Painlev\'{e} property is locally linearizable.
Such a local integrable system for the Painlev\'{e} equation is called the singular normal form in \cite{Chi1, CosCos}.

\subsection{The space of initial conditions}\label{section3.2}

A purpose in this section is to construct the space of initial conditions for $(\text{P}_{\text{III} (D_6)})$.
On the manifold $\mathcal{M}_1$, there are three singularities of the foliation of integral curves;
$(X_2, Z_2, \varepsilon _2) = (1,0,0), (0,0,0)$
and $(\widetilde{X}_2, \widetilde{Z}_2, \widetilde{\varepsilon }_2) = (1,0,0)$.
They correspond to movable poles of the Laurent series solutions (i), (ii) and (iii), respectively.
We will resolve these singularities by weighted blow-ups.
On the blow-up space, $(\text{P}_{\text{III} (D_6)})$ is again written in a Hamiltonian system,
whose Hamiltonian function is polynomial in dependent variables.
This implies that singularities of the foliation are resolved and the space of initial conditions
is obtained by three times blow-ups of $\mathcal{M}_1$.
This strategy is applicable to the other Painlev\'{e} equations, even for higher-dimensional Painlev\'{e} equations.

(i) blow-up at $(X_2, Z_2, \varepsilon _2) = (0,0,0)$.

For the vector f\/ield (\ref{3-4}), put
\begin{gather*}
u = X_2 + \alpha \varepsilon _2, \qquad
v = Z_2, \qquad
w = \varepsilon _2.
\end{gather*}
Then, the linear part is diagonalized and we obtain
\begin{gather*}
\dot{u} = 2u - 2u^2 + w(\alpha u - \alpha v - \beta u), \\
\dot{v} = 2v + v(w-4u-2v+2\alpha w-2\beta w), \\
\dot{w} = w - w(2u + v - \alpha w + \beta w).
\end{gather*}
The origin $(u,v,w) = (0,0,0)$ is a singularity of the foliation of integral curves.
To resolve it, we introduce the weighted blow-up def\/ined by
\begin{gather*}
u   = u_1^2   = v_2^2 u_2   = w_3^2 u_3, \qquad
v   = u_1^2v_1   = v_2^2   = w_3^2 v_3, \qquad
w   = u_1w_1   = v_2 w_2   = w_3.
\end{gather*}
The weight $(2,2,1)$, the exponents in the right hand sides, is taken from the eigenvalues
of the Jacobi matrix at the singularity.
The exceptional divisor $\{ u_1 = 0\} \cup \{ v_2 = 0\} \cup \{ w_3 = 0\}$
is the weighted projective space $\mathbb C P^2 (2,2,1)$.
The relation between the original coordinates $(x,y,z)$ and the new coordinates $(u_3, v_3, w_3)$ is given by
\begin{alignat}{4}
& x = u_3 w_3^2 - \alpha w_3, \qquad&&
y = 1/w_3, \qquad&&
z = v_3,&\nonumber\\
&u_3 = xy^2 + \alpha y, \qquad&&
w_3 = 1/y, \qquad&&
v_3 = z.&
\label{3-13}
\end{alignat}
Note that eventually the independent variable $z$ is not changed and
(\ref{3-13}) def\/ines a f\/iber bundle over $z$-space, whose f\/iber is a $(x,y)$-space
and $(u_3, w_3)$-space glued by the above relation.
In the new coordinates, $(\text{P}_{\text{III} (D_6)})$ is transformed into the Hamiltonian system
\begin{gather*}
\frac{dw_3}{dz} = -\frac{\partial H_1}{\partial u_3}, \qquad \frac{du_3}{dz} = \frac{\partial H_1}{\partial w_3},
\end{gather*}
with the Hamiltonian function
\begin{gather*}
zH_1 = -u + u^2w^2 + uw^2z - \alpha wz - (\alpha -\beta ) uw
\end{gather*}
(here, the subscript is omitted for simplicity).
Since $H_1$ is polynomial in $(u_3, w_3)$, there are no singularities of the foliation in this chart;
the singularity associated with the Laurent series solution~(ii) is resolved.
Furthermore, we can verify that
\begin{gather}
dx \wedge dy= dw_3 \wedge du_3,\qquad dx \wedge dy + dH \wedge dz = dw_3 \wedge du_3 + dH_1 \wedge dz.
\label{3-15}
\end{gather}

(ii) blow-up at $(X_2, Z_2, \varepsilon _2) = (1,0,0)$.

Putting $X_2 -1 = \hat{X}_2$ and $u = \hat{X}_2 + \beta \varepsilon _2$, $v = Z_2$, $w = \varepsilon _2$
for the vector f\/ield~(\ref{3-4}) results in
\begin{gather*}
\dot{u} = -2u - 2u^2 - w(\alpha u - \beta u + \beta v), \\
\dot{v} = -2v + v(w-4u-2v-2\alpha w+2\beta w), \\
\dot{w} = -w - w(2u + v + \alpha w - \beta w).
\end{gather*}
To resolve the singularity at $(u,v,w) = (0,0,0)$, we introduce the weighted blow-up with the weight $(2,2,1)$
\begin{gather*}
u   = u_4^2   = v_5^2 u_5   = w_6^2 u_6, \qquad
v   = u_4^2v_4   = v_5^2   = w_6^2 v_6, \qquad
w   = u_4w_4   = v_5 w_5   = w_6.
\end{gather*}
The relation between the original coordinates $(x,y,z)$ and the new coordinates $(u_6, v_6, w_6)$ is given by
\begin{alignat}{4}
& x = u_6 w_6^2 - \beta w_6 + 1, \qquad &&
y = 1/w_6, \qquad &&
z = v_6,& \nonumber\\
& u_6 = xy^2 + \beta y - y^2, \qquad&&
w_6 = 1/y, \qquad &&
v_6 = z,&
\label{3-17}
\end{alignat}
which def\/ines a f\/iber bundle over $z$-space.
In the new coordinates, $(\text{P}_{\text{III} (D_6)})$ is written as the Hamiltonian system
\begin{gather*}
\frac{dw_6}{dz} = -\frac{\partial H_2}{\partial u_6}, \qquad \frac{du_6}{dz} = \frac{\partial H_2}{\partial w_6},
\end{gather*}
with the Hamiltonian function
\begin{gather}
zH_2 = u + u^2w^2 + uw^2z - \beta wz + (\alpha -\beta ) uw
\label{3-18}
\end{gather}
(here, the subscript is omitted for simplicity).
Since $H_2$ is polynomial in $(u_6, w_6)$, the singularity associated with the Laurent series solution (i) is resolved.
As before, we can verify the symplectic relation~(\ref{3-15}).

(iii) blow-up at $(\widetilde{X}_2, \widetilde{Z}_2, \widetilde{\varepsilon} _2) = (1,0,0)$.

This singularity is resolved by the same way as above by the weighted blow-up with the weight~$(2,2,1)$.
The f\/inal result is easily obtained by the B\"{a}cklund transformation~(\ref{3-9}) as follows.
Def\/ine the new coordinates $(u_9, v_9, w_9)$ by
\begin{gather*}
u_9 = \widetilde{x}\widetilde{y}^2 + \widetilde{\beta} \widetilde{y} - \widetilde{y}^2, \qquad
w_9 = 1/\widetilde{y}, \qquad
v_9 = \widetilde{z},
\end{gather*}
which is the same formula as~(\ref{3-17}).
Substituting~(\ref{3-9}) yields
\begin{gather}
u_9 = -z^2/y^2 - xz - (\alpha +\beta -1) z/y, \qquad
w_9 = y/z, \qquad
v_9 = z.
\label{3-20}
\end{gather}
By this coordinates change, $(\text{P}_{\text{III} (D_6)})$ is transformed into the
Hamiltonian system of $(u_9, w_9)$, whose Hamiltonian function has the same form as~(\ref{3-18}),
though $(\alpha , \beta)$ is replaced by $(\widetilde{\alpha }, \widetilde{\beta}) = (\alpha , 1-\beta)$.

In this manner, all singularities are resolved and we have

\begin{Theorem}
The space of initial conditions $E(z)$ of $({\rm P}_{{\rm III} (D_6)})$ is given by
$\mathbb C^2_{(x,y)} \cup \mathbb C^2_{(u_3,w_3)} \cup \mathbb C^2_{(u_6, w_6)} \cup \mathbb C^2_{(u_9,w_9)}$ glued by
the symplectic transformations~\eqref{3-13}, \eqref{3-17} and~\eqref{3-20}.
The space~$E(z)$ is a nonsingular symplectic surface parameterized by $z\in \mathbb C\backslash \{ 0\}$,
on which $({\rm P}_{{\rm III} (D_6)})$ is expressed as a polynomial Hamiltonian system.
\end{Theorem}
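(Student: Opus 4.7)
The plan is to verify, chart by chart, that the construction of Section~\ref{section3.2} assembles a holomorphic symplectic fibration over $z \in \mathbb{C} \setminus \{0\}$ on which the foliation by integral curves of $(\text{P}_{\text{III}(D_6)})$ extends to an everywhere-regular foliation governed by a polynomial Hamiltonian. The proof is a synthesis of three local verifications together with one global completeness argument.

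First I would verify the symplectic and polynomial statements on each of the three new charts. For $(u_3, w_3)$, direct differentiation of \eqref{3-13} gives $dx \wedge dy = dw_3 \wedge du_3$, and substituting into $zH_{\text{III}(D_6)}$ produces the polynomial $zH_1$ stated in the text; the analogous routine on $(u_6, w_6)$ via \eqref{3-17} yields $zH_2$ as in \eqref{3-18}; the computation on $(u_9, w_9)$ reduces, by the $\pi_2$-invariance of $(\text{P}_{\text{III}(D_6)})$ \eqref{3-9} combined with \eqref{3-20}, to the $(u_6, w_6)$ computation with $\beta$ replaced by $1 - \beta$. These same manipulations simultaneously establish the symplectic identities \eqref{3-15} on all three overlaps.

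Next I would show that the four charts jointly resolve every singularity of the foliation. On $\mathcal{M}_1$, Section~\ref{section3.1} identifies all singularities of the foliation on the infinity set $\{\varepsilon_2 = 0\} \cup \{\varepsilon_3 = 0\}$ with the four fixed points of \eqref{3-4} and \eqref{3-5}. Among these, the two with $Z_2 \neq 0$ correspond to the irregular singular point $z = \infty$ and are excluded, while the remaining three, namely $(X_2, Z_2, \varepsilon_2) = (0,0,0), (1,0,0)$ and $(\widetilde{X}_2, \widetilde{Z}_2, \widetilde{\varepsilon}_2) = (1,0,0)$, correspond to movable poles. Since Hamilton's equations for a polynomial Hamiltonian on an affine chart $\mathbb{C}^2$ have no indeterminacy, the foliation extends holomorphically across each exceptional divisor once the polynomial Hamiltonian on the blow-up chart has been exhibited.

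The main obstacle is completeness: one must confirm that no further singularities of the foliation exist on the interior chart $\mathbb{C}^2_{(x,y)}$ or on the exceptional divisors themselves. The latter is absorbed into the polynomial Hamiltonian calculation above. The former follows from the classification of movable poles in Section~\ref{section2.2}: every such pole must belong to one of the Laurent types (i)--(iii), and each type was matched with exactly one of the three blown-up fixed points. Together with the restriction $z \neq 0$ forced by the factor $z$ in $zH_1$ and $zH_2$, this yields $E(z)$ as a nonsingular symplectic surface depending holomorphically on $z \in \mathbb{C}\setminus\{0\}$, on which $(\text{P}_{\text{III}(D_6)})$ is a polynomial Hamiltonian system.
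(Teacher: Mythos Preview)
Your proposal is correct and follows essentially the same approach as the paper: the theorem in the paper is stated as a summary of the construction carried out in Section~\ref{section3.2}, namely the three weighted blow-ups at the fixed points $(X_2,Z_2,\varepsilon_2)=(0,0,0),(1,0,0)$ and $(\widetilde X_2,\widetilde Z_2,\widetilde\varepsilon_2)=(1,0,0)$, together with the verification that each yields a polynomial Hamiltonian and that the transition maps are symplectic via~\eqref{3-15}. Your write-up merely reorganises these ingredients and makes the completeness step (matching movable poles with the Laurent types (i)--(iii)) explicit, which the paper leaves implicit.
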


\subsection{The Riccati solutions}\label{section3.3}

It is known that when the parameter $\alpha $ is an integer or $\beta$ is an integer,
there exists a one-parameter family of solutions of $(\text{P}_{\text{III} (D_6)})$ satisfying
the Riccati equation, which is equivalent to the Bessel equation.
For example, when $\alpha =0$ (resp. $\beta = 0$), the Riccati equation is given by
\begin{gather}
z \frac{dy}{dz} = -y^2 + \beta y + z, \qquad \text{resp.} \quad z \frac{dy}{dz} = y^2 + \alpha y + z.
\label{Riccati}
\end{gather}
The B\"{a}cklund transformations yield the Riccati equations for a general case $\alpha \in \mathbb Z$
or $\beta \in \mathbb Z$.
Let us prove this fact from a view point of dynamical systems theory.

Recall that there are two f\/ixed points $(X_2, Z_2, \varepsilon _2) = (0,1,0), (1,-1,0)$
of the vector f\/ield~(\ref{3-4}) corresponding to the irregular singular point.
The Jacobi matrices at these points are shown in~(\ref{3-8}),
and eigenvalues of them are $2$, $-2$, $0$.
Hence, there exist a~center-stable manifold and a center-unstable manifold at these points.
Let us calculate them explicitly.

For the point $(X_2, Z_2, \varepsilon _2) = (0,1,0)$, put $\hat{Z}_2 = Z_2 - 1$ and
\begin{gather*}
  u = X_2 + \hat{Z}_2 + \frac{1}{2}\varepsilon _2 (\alpha + 2 \beta -1), \qquad
  v = -X_2 - \frac{1}{2}\alpha \varepsilon _2, \qquad
  w = \varepsilon _2.
\end{gather*}
Then, the linear part of equation~(\ref{3-4}) is diagonalized around $(0,1,0)$ and we obtain
\begin{gather*}
 \dot{u} = -2u + u \left( -2u - \frac{1}{2}w - \frac{\alpha }{2}w + \beta w \right)
 + (1-\alpha ) \left( \frac{1}{2}vw + \frac{1}{4}w^2 - \frac{\beta}{2} w^2 \right), \\
 \dot{v} = 2v + v\left( 2v + \frac{\alpha }{2}w - \beta w \right)
 + \frac{1}{2}\alpha \left( uw + \frac{1}{2}w^2 - \beta w^2 \right), \\
  \dot{w} = w \left( v-u-\frac{1}{2}w \right).
\end{gather*}
The stable, unstable, center subspaces are $u,v,w$-directions, respectively.
In particular, the center-unstable manifold (resp. the center-stable manifold)
is expressed as the graph of some function $v = \phi (u,w)$ (resp.\ $u = \varphi (v,w)$).

It is obvious that when $\alpha = 0$, the center-unstable manifold is exactly given by $v = 0$.
This implies $X_2 = x = 0$.
When $\alpha = 0$ and $x = 0$, equation~(\ref{1-7}) is reduced to the Riccati equation~(\ref{Riccati}).
Similarly, when $\alpha =1$, the center-stable manifold is exactly given by $u=0$.
This implies
\begin{gather*}
0 = X_2 + \hat{Z}_2 + \beta \varepsilon _2 = x + z/y^2 + \beta /y -1.
\end{gather*}
Substituting this relation to equation~(\ref{1-7}) yields the Riccati equation for $\alpha =1$,
though it is also obtained by the B\"{a}cklund transformation to that for~$\alpha =0$.
The same argument at the point $(X_2, Z_2, \varepsilon _2) = (1,-1,0)$ provides
the Riccati equation for $\beta = 0$.

\begin{Theorem}
When $\alpha \in \mathbb Z$ or $\beta \in \mathbb Z$, there exists a one-parameter family of solutions
of $({\rm P}_{{\rm III} (D_6)})$ governed by the Riccati equation of Bessel type.
The one-parameter family of solutions forms a center-$($un$)$stable manifold of~\eqref{3-4}
in $\mathbb C P^3(0,1,2,1)$.
\end{Theorem}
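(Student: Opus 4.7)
The plan is to exploit the dynamical systems picture already set up for equation~\eqref{3-4} at the two fixed points corresponding to the irregular singular point, and to combine center manifold analysis with the Bäcklund transformations of Table~\ref{table2} to cover the full lattice of integer parameter values. The key structural input is that the Jacobi matrices in~\eqref{3-8} at $(X_2,Z_2,\varepsilon_2)=(0,1,0)$ and $(1,-1,0)$ both have spectrum $\{2,-2,0\}$, so by the standard center manifold theorem the three invariant manifolds (stable, unstable, and one-dimensional center, together with the two-dimensional center-stable and center-unstable) exist near each of these points.

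First I would work at the fixed point $(0,1,0)$, using the diagonalizing linear change of variables $(u,v,w)$ introduced in Section~\ref{section3.3}. In these coordinates the stable, unstable and center directions are the $u$-, $v$- and $w$-axes respectively, so the center-unstable manifold is the graph $v=\phi(u,w)$ of a function vanishing at the origin, and similarly the center-stable manifold is a graph $u=\varphi(v,w)$. The point is to show that these graphs degenerate to their linear tangent planes exactly when $\alpha\in\{0,1\}$. Inspecting the rewritten system, the $v$-equation has its nonlinear terms proportional either to $v$ itself or to the factor $\alpha$; hence when $\alpha=0$ the plane $\{v=0\}$ is invariant and coincides with the center-unstable manifold. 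Dually, when $\alpha=1$ the factor $1-\alpha$ in the $u$-equation kills all obstructions to the invariance of $\{u=0\}$, giving the center-stable manifold explicitly. Translating $v=0$ back through the chart relations~\eqref{3-1} gives $x=0$, and substituting $x=0$ into~\eqref{1-7} collapses the second equation to $zy'=-y^2+\beta y+z$, which is the first Riccati equation of~\eqref{Riccati}; translating $u=0$ similarly yields $x+z/y^2+\beta/y-1=0$, which upon substitution into~\eqref{1-7} gives the $\alpha=1$ Riccati equation (equivalently obtained from the $\alpha=0$ case by the Bäcklund transformation $s_0$ in Table~\ref{table2}).

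To promote this from $\alpha\in\{0,1\}$ to arbitrary $\alpha\in\mathbb Z$, I would iterate the Bäcklund transformation $s_1\colon \alpha\mapsto -\alpha$ combined with $\pi_1\colon\alpha\mapsto 1-\alpha$ (or the composition $s_1\pi_1$) from Table~\ref{table2}, which together generate a translation by~$1$ on the $\alpha$-lattice. Since each generator is a birational automorphism of $(\text{P}_{\text{III}(D_6)})$ sending solutions to solutions and preserving the foliation structure on $\mathbb C P^3(0,1,2,1)$, the center-(un)stable manifold at $\alpha=0,1$ is carried to a center-(un)stable manifold at every $\alpha\in\mathbb Z$, and the Riccati reduction is preserved (with the Riccati equation itself transformed into an equivalent Bessel-type equation). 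An entirely symmetric argument at the other irregular fixed point $(X_2,Z_2,\varepsilon_2)=(1,-1,0)$ handles the case $\beta\in\mathbb Z$; by the involution $\sigma_1\colon(\alpha,\beta)\mapsto(\beta,\alpha)$ this amounts to relabelling.

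The main obstacle, and the only nonroutine step, is verifying that the candidate invariant planes $\{v=0\}$ at $\alpha=0$ and $\{u=0\}$ at $\alpha=1$ really do coincide with the center-unstable / center-stable manifolds rather than being coincidentally invariant subsets of some other dynamical stratum. This is handled by dimension count: the center-unstable manifold at $(0,1,0)$ is two-dimensional with tangent space $\operatorname{span}(u,w)$, while $\{v=0\}$ is an invariant $2$-plane containing that tangent space, so by the uniqueness of the local center-unstable manifold through a given tangent plane the two must agree in a neighborhood of the fixed point. The rest of the argument is then a direct verification, and the one-parameter family of solutions is parameterized by the one-dimensional center direction, i.e.\ by the choice of integral curve inside the invariant surface.
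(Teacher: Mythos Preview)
Your proposal is correct and follows essentially the same route as the paper: diagonalize~\eqref{3-4} at the irregular-singularity fixed points, observe that the invariant planes $\{v=0\}$ and $\{u=0\}$ arise precisely at $\alpha=0$ and $\alpha=1$, translate these back via~\eqref{3-1} to obtain the Riccati reductions~\eqref{Riccati}, handle $\beta$ symmetrically at $(1,-1,0)$, and then propagate to all integers via the B\"acklund transformations of Table~\ref{table2}. One small caveat on your final paragraph: center-unstable manifolds are not in general unique, so the appeal to uniqueness is not quite right; what you actually have, and all you need, is that an invariant analytic $2$-plane tangent to $E^c\oplus E^u$ is \emph{a} center-unstable manifold, which already suffices for the theorem as stated.
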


The same argument is applicable to the other Painlev\'{e} equations, even for higher-dimensional
Painlev\'{e} equations.
For two-dimensional equations, it is well known that when parameters take certain specif\/ic values,
the Painlev\'{e} equations are reduced to Riccati-type equations except for~$(\text{P}_{\text{I}})$, $(\text{P}_{\text{III} (D_7)})$ and $(\text{P}_{\text{III} (D_8)})$.
For such specif\/ic values of parameters, center-(un)stable manifolds in $\mathbb C P^3(p,q,r,s)$
are exactly calculated and the Riccati equations are obtained by restricting the equations
to the center-(un)stable manifolds.

\looseness=-1
The result is summarized in Table~\ref{table3}.
The third column shows one of the parameters for which equations are reduced to the Riccati equations.
The other possible parameters are obtained by the B\"{a}cklund transformations.
The fourth column denotes the name of the Riccati equation when it is written as a second order linear equation.
Note that the weight of the Riccati equation is also def\/ined through the Newton polyhedron.
For example, the Riccati equation of Airy type is def\/ined by
\begin{gather*}
\frac{dy}{dz} = y^2 + z.
\end{gather*}
The exponents of monomials in the right hand side are $(1,1)$ and $(-1,2)$.
Since they are on the line $y + 2z = 3$, the weighted projective space for the equation
is $\mathbb C P^2 (1,2,3)$.
See~\cite{Chi3} for the analy\-sis of the Airy equation by means of the weighted projective space.
The last column of Table~\ref{table3} gives the weight of each Riccati equation.
It is interesting to note that these weights are obtained by deleting the f\/irst or second numbers
from the weights of the corresponding Painlev\'{e} equations.

\begin{table}[h]
\centering
\begin{tabular}{|c||c|c|c|c|}
\hline
 & weight & parameter & Riccati & weight \\ \hline \hline
$\text{P}_\text{II}$ & $(2,1,2,3)$ & $\alpha =1/2$ & Airy & $(1,2,3)$ \\ \hline
$\text{P}_\text{IV}$ & $(1,1,1,2)$ & $\alpha =0$ & Hermite & $(1,1,2)$ \\ \hline
$\text{P}_{\text{III}(D_6)}$&$(0,1,2,1)$ & $\alpha =0$ & Bessel & $(1,2,1)$ \\ \hline
$\text{P}_\text{V}$ & $(1,0,1,1)$ & $\alpha_1 =0$ & CHG & $(1,1,1)$ \\ \hline
$\text{P}_\text{VI}$ & $(1,0,0,1)$ & $\alpha_1 =0$ & HG & $(1,0,1)$ \\ \hline
\end{tabular}

\caption{The type of Riccati equations and their weights.
CHG and HG denote the conf\/luent hyper\-geo\-metric and hyper\-geo\-metric equations, respectively.}\label{table3}
\end{table}

\subsection{Boutroux's coordinates}\label{section3.4}

For the f\/irst and second Painlev\'{e} equations, the third local chart $(X_3, Y_3, \varepsilon _3)$
of $\mathbb C P^3(p,q,r,s)$ def\/ined by equation~(\ref{2-3}) is equivalent to Boutroux's coordinates introduced in~\cite{Bou}
to investigate the irregular singular point $z = \infty$.
For the other Painlev\'{e} equations, the chart $(X_3, Y_3, \varepsilon _3)$ plays the same role
as Boutroux's coordinates;
as $\varepsilon _3 \to 0$, equation~(\ref{3-5}) is reduced to the autonomous Hamiltonian system
\begin{gather}
\dot{X}_3 = 2X_3Y_3 - 2X_3^2Y_3, \qquad
\dot{Y}_3 = 1 - Y_3^2 + 2X_3Y_3^2,
\label{auto}
\end{gather}
which is often called the autonomous limit.
Since a generic integral curve given by $\mathcal{H}_{{\rm III}(D_6)} = X_3^2 Y_3^2 - X_3Y_3^2 + X_3 = \text{const}$
is an elliptic curve, a general solution can be expressed by Weierstrass's elliptic functions.
Then, the system~(\ref{3-5}) with small $\varepsilon _3$ can be studied by a perturbation method.

Let us calculate the action of the extended af\/f\/ine Weyl group $\widetilde{W}\big((2A_1)^{(1)}\big)$
restricted on the set $\{ \varepsilon _3 = 0\}$, which leaves the autonomous limit (\ref{auto}) invariant.

\begin{Proposition}
The birational transformation group $\widetilde{W}\big((2A_1)^{(1)}\big)$ is extended to the
birational transformation group acting on $\mathbb C P^3(0,1,2,1)$.
The transformation group which leaves the autonomous limit \eqref{auto} invariant is given by
$\mathbb Z_2 \ltimes \operatorname{Aut} \big((2A_1)^{(1)}\big) =\mathbb Z_2 \ltimes \langle \pi_1, \pi_2, \sigma _1 \rangle$.
\end{Proposition}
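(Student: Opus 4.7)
The plan is to work chart by chart on $\mathbb C P^3(0,1,2,1)$. The first claim---that each element of $\widetilde W\bigl((2A_1)^{(1)}\bigr)$ extends to a birational self-map of $\mathbb C P^3(0,1,2,1)$---follows by rewriting each generator from Table~\ref{table2} in the inhomogeneous coordinates $(X_2,Z_2,\varepsilon_2)$ and $(X_3,Y_3,\varepsilon_3)$ using~\eqref{3-1}. Each generator is rational in $(x,y,z,\alpha,\beta)$ and~\eqref{3-1} is monomial up to the square root in $\varepsilon_3^2 = 1/z$, so the pullback is rational on each chart once a branch is fixed, and any branch ambiguity is absorbed by the $\mathbb Z_r=\mathbb Z_2$ orbifold identification. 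Gluing yields a birational action on the whole of $\mathbb C P^3(0,1,2,1)$.

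For the second claim I would restrict each generator to $\{\varepsilon_3 = 0\}$, on which the autonomous limit~\eqref{auto} lives. For the simple reflections $s_0,s_1,s_0',s_1'$, the coordinate correction in Table~\ref{table2} is of the form (parameter-dependent function)/(polynomial in $x,y,z$); after the substitution $x=X_3$, $y=Y_3/\varepsilon_3$, $z=\varepsilon_3^{-2}$ the numerator stays $O(1)$ while the denominator grows at least as $\varepsilon_3^{-1}$, so the correction vanishes as $\varepsilon_3\to 0$. Hence the affine Weyl subgroup $W\bigl((2A_1)^{(1)}\bigr)$ acts trivially on the autonomous limit, and only the Dynkin quotient $\operatorname{Aut}\bigl((2A_1)^{(1)}\bigr)=\langle\pi_1,\pi_2,\sigma_1\rangle$ survives with a potentially nontrivial action. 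A direct substitution gives
\[
\pi_1|_{\varepsilon_3=0}\colon (X_3,Y_3)\mapsto \bigl((X_3-1)Y_3^2+1,\,-1/Y_3\bigr),\qquad
\pi_2|_{\varepsilon_3=0}\colon (X_3,Y_3)\mapsto \bigl(-X_3Y_3^2,\,1/Y_3\bigr),
\]
together with $\sigma_1|_{\varepsilon_3=0}\colon (X_3,Y_3)\mapsto(1-X_3,\,-iY_3)$ after the branch choice $\widetilde\varepsilon_3=i\varepsilon_3$ in $\widetilde z = -z$. A short verification shows that $\mathcal H_{\text{III}(D_6)} = X_3^2Y_3^2 - X_3Y_3^2 + X_3$ is fixed by $\pi_1$ and $\pi_2$ and sent to $-\mathcal H_{\text{III}(D_6)}+1$ by $\sigma_1$, so each of the three preserves the foliation of~\eqref{auto}.

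The extra $\mathbb Z_2$ factor then appears naturally: whereas $\sigma_1$ is an involution on the orbifold $\mathbb C P^3(0,1,2,1)$, on the lift to the $(X_3,Y_3,\varepsilon_3)$-chart one computes $\sigma_1^2 = \iota$ with $\iota\colon (X_3,Y_3,\varepsilon_3)\mapsto(X_3,-Y_3,-\varepsilon_3)$, which is precisely the $\mathbb Z_r=\mathbb Z_2$ orbifold isotropy---the identity on $\mathbb C P^3(0,1,2,1)$ but a nontrivial involution of $\{\varepsilon_3=0\}$ given by $(X_3,Y_3)\mapsto(X_3,-Y_3)$. Since $\mathcal H_{\text{III}(D_6)}$ is even in $Y_3$, $\iota$ fixes it and preserves~\eqref{auto} up to time reversal; $\iota$ also commutes with each of $\pi_1,\pi_2,\sigma_1$, so the group generated is exactly $\mathbb Z_2\ltimes\langle\pi_1,\pi_2,\sigma_1\rangle$. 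The main obstacle I anticipate is exhaustivity---ruling out any further birational symmetry of~\eqref{auto} extending to $\mathbb C P^3(0,1,2,1)$---which I would address by working on a generic elliptic level curve of $\mathcal H_{\text{III}(D_6)}$ and using the ramification data along the other charts to cut any candidate extra symmetry down to the listed generators.
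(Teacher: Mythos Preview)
Your approach is essentially the same as the paper's. Both proofs (i) declare the first assertion a direct computation after rewriting the generators of Table~\ref{table2} in the inhomogeneous charts, (ii) show that the simple reflections $s_0,s_1,s_0',s_1'$ become trivial on $\{\varepsilon_3=0\}$ (the paper does this by the explicit example $s_1\colon (X_3,Y_3,\varepsilon_3)\mapsto(X_3,Y_3+\alpha\varepsilon_3/X_3,\varepsilon_3)$, you by the order-of-magnitude argument), (iii) record the restricted actions of $\pi_1,\pi_2,\sigma_1$ (your formulas agree with the paper's up to the branch of $\sqrt{-1}$), and (iv) identify the extra $\mathbb Z_2$ with the orbifold isotropy $(X_3,Y_3,\varepsilon_3)\mapsto(X_3,-Y_3,-\varepsilon_3)$.

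Two remarks. Your order-of-magnitude argument for the reflections is a little loosely phrased: for $s_1$ the denominator $x=X_3$ does not grow, and the point is rather that the correction to $y$ is $O(1)$ while $y$ itself is $O(\varepsilon_3^{-1})$, so the induced correction to $Y_3=y\varepsilon_3$ is $O(\varepsilon_3)$; your intended mechanism is correct but the wording should be tightened. Second, your observation that $\sigma_1^2=\iota$ on the lift is a nice refinement the paper does not make; note however that it implies $\iota\in\langle\sigma_1\rangle$, so the statement ``the group generated is exactly $\mathbb Z_2\ltimes\langle\pi_1,\pi_2,\sigma_1\rangle$'' needs to be read as identifying the abstract group structure (a $\mathbb Z_2$-extension of $\operatorname{Aut}((2A_1)^{(1)})$), not as an internal direct factorisation. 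The paper is equally informal on this point and, like you, does not address exhaustivity beyond the generators coming from $\widetilde W((2A_1)^{(1)})$ and the orbifold $\mathbb Z_2$.
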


\begin{proof}
The f\/irst part of Proposition is verif\/ied by a straightforward calculation.
To show the second part, we should write down the actions in the $(X_3, Y_3, \varepsilon _3)$-chart.
For example, the action of $s_1$ in the $(X_3, Y_3, \varepsilon _3)$-chart is given by
\begin{gather*}
(X_3 , Y_3, \varepsilon _3, \alpha , \beta) \mapsto
\left(X_3, Y_3 + \alpha \frac{\varepsilon _3}{X_3}, \varepsilon _3, -\alpha , \beta\right).
\end{gather*}
On the set $\{ \varepsilon _3 = 0\}$, it is reduced to
\begin{gather*}
(X_3 , Y_3, \alpha , \beta) \mapsto (X_3, Y_3, -\alpha , \beta).
\end{gather*}
Since the autonomous limit (\ref{auto}) is independent of the parameters $\alpha$, $\beta$,
the action of $s_1$ to~(\ref{auto}) is trivial.
Similarly, the actions of $s_0, s_0'$ and $s_1'$ to (\ref{auto}) are reduced to the trivial one.
On the other hand, it is easy to conf\/irm that the restriction of the actions of $\pi_1$, $\pi_2$, $\sigma _1$
are not trivial, which are explicitly given by
\begin{gather*}
 \pi_1 \colon \  (X_3, Y_3) \mapsto \big(X_3Y_3^2 - Y_3^2 + 1,  -1/Y_3\big), \\
  \pi_2 \colon \  (X_3, Y_3) \mapsto \big({-}X_3Y_3^2 ,  1/Y_3\big), \\
  \sigma _1 \colon \  (X_3, Y_3) \mapsto \big(1-X_3,  \sqrt{-1} Y_3\big).
\end{gather*}
Furthermore,~(\ref{auto}) is invariant under the $\mathbb Z_2$ action $Y_3 \mapsto -Y_3$
due to the orbifold structure of $\mathbb C P^3 (0,1,2,1)$.
Since $r = \deg (z) = 2$, $(X_3, Y_3, \varepsilon _3)$ are coordinates
on the lift $\mathbb C^3$ of the quotient $\mathbb C^3/\mathbb Z_2$, where $\mathbb Z_2$ action is def\/ined by
$(X_3, Y_3, \varepsilon _3) \mapsto (X_3, -Y_3, -\varepsilon _3)$.
This is reduced to the action $Y_3 \mapsto -Y_3$ on the set $\{ \varepsilon _3 = 0\}$.
\end{proof}

A similar result also holds for the other Painlev\'{e} equations except for ($\text{P}_{\text{VI}}$)
and summarized in Table~\ref{table4}.
If $r = \deg (z) = 1$, the symmetry group of the autonomous limit is given by
the Dynkin automorphism group $G = \operatorname{Aut} \big(R^{(1)}\big)$ because the action of
$W\big(R^{(1)}\big)$ on the set $\{ \varepsilon _3 = 0\}$ is reduced to the trivial action as above.
If $r>1$, the autonomous limit is further invariant under the $\mathbb Z_r$ action arising from
the orbifold structure of $\mathbb C P^3 (p,q,r,s)$.
The autonomous limit on the set $\{ \varepsilon _3 = 0\}$ is not def\/ined for ($\text{P}_{\text{VI}}$) in this way
because $r=0$.
\begin{table}[h]
\centering
\begin{tabular}{|c||c|c|c|}
\hline
 & weight & $\mathcal{H}_J$ & symmetry \\ \hline \hline
$\text{P}_\text{I}$ & $(3,2,4,5)$ & $X^2 - 4Y^3 - 2Y$ & $\mathbb Z_4$ \\ \hline
$\text{P}_\text{II}$ & $(2,1,2,3)$ & $X^2 - Y^4 - Y^2$ & $\mathbb Z_2\ltimes \operatorname{Aut} \big(A_1^{(1)}\big)$ \\ \hline
$\text{P}_\text{IV}$ & $(1,1,1,2)$ & $X^2Y - XY^2 - 2XY$ & $\operatorname{Aut} \big(A_2^{(1)}\big)$ \\ \hline
$\text{P}_{\text{III}(D_8)}$ & $(-1,2,4,1)$ & $X^2 Y^2 - \frac{1}{2}Y - \frac{1}{2Y}$ & $\mathbb Z_4\ltimes \mathbb Z_2$\\ \hline
$\text{P}_{\text{III}(D_7)}$ & $(-1,2,3,1)$ & $X^2Y^2 + X+Y$ & $\mathbb Z_3 \ltimes \operatorname{Aut} \big(A_1^{(1)}\big)$ \\ \hline
$\text{P}_{\text{III}(D_6)}$ & $(0,1,2,1)$ & $X^2 Y^2 - XY^2 + X$ & $\mathbb Z_2 \ltimes \operatorname{Aut} \big((2A_1)^{(1)}\big)$ \\ \hline
$\text{P}_{\text{V}}$ & $(1,0,1,1)$ & $X^2Y^2-X^2Y+XY^2-XY$ & $\operatorname{Aut} \big(A_3^{(1)}\big)$ \\ \hline
\end{tabular}
\caption{Hamiltonian functions of the autonomous limit
def\/ined on the set $\{ \varepsilon _3 = 0\}$.
$(X_3, Y_3)$ is denoted by $(X,Y)$ for simplicity.}\label{table4}
\end{table}

In the rest of this paper, the other Painlev\'{e} equations $(\text{P}_{\text{III}(D_7)})$,
$(\text{P}_{\text{III}(D_8)})$, $(\text{P}_{\text{V}})$ and $(\text{P}_{\text{VI}})$ are studied
with the aid of the weighted projective spaces and dynamical systems theory
(see~\cite{Chi1} for $(\text{P}_{\text{I}})$, $(\text{P}_{\text{II}})$ and $(\text{P}_{\text{IV}})$).
Since the strategy is completely the same as that for $(\text{P}_{\text{III}(D_6)})$,
we only show important steps and formulae.

\section[The third Painlev\'{e} equation of type $D_7$]{The third Painlev\'{e} equation of type $\boldsymbol{D_7}$}\label{section4}

The space $\mathbb C P^3(-1{,}2{,}3{,}1)$ for $(\text{P}_{\text{III}(D_7)})$ is covered by four inhomogeneous coordinates
$(Y_1{,} Z_1{,}\varepsilon _1)$, $ (X_2, Z_2, \varepsilon _2)$, $(X_3, Y_3, \varepsilon _3)$ and $(x,y,z)$ related as
\begin{gather*}
x =   \varepsilon _1 =   X_2\varepsilon _2 =  X_3\varepsilon _3,\qquad
y =   Y_1\varepsilon _1^{-2} =   \varepsilon _2^{-2}=  Y_3\varepsilon _3^{-2},\qquad
z =   Z_1\varepsilon_1 ^{-3} =   Z_2\varepsilon _2^{-3}=  \varepsilon _3^{-3}.
\label{4-1}
\end{gather*}

We give the third Painlev\'{e} equation of type $D_7$ on the local chart $(x,y,z)$.
On the other local charts, $(\text{P}_{\text{III} (D_7)})$ is expressed as rational dif\/ferential equations.
By rewriting them as $3$-dimensional autonomous vector f\/ields, we obtain
\begin{gather}
  \dot{Y}_1 = 2Y_1 + 2Y_1^2-Z_1+\alpha \varepsilon _1 Y_1,\nonumber \\
  \dot{Z}_1 =3Z_1 + 6Y_1Z_1 - \varepsilon _1Z_1 + 3\alpha \varepsilon _1Z_1,\label{4-2} \\
  \dot{\varepsilon }_1 = \varepsilon _1 (1 + 2Y_1 + \alpha \varepsilon _1),\nonumber
\\
 \dot{X}_2 = 2 + 2X_2^2 - X_2Z_2 + \alpha \varepsilon _2 X_2, \nonumber\\
 \dot{Z}_2 = 6X_2 Z_2+3Z_2^2 + 3\alpha \varepsilon _2Z_2-2\varepsilon _2 Z_2 ,\nonumber\\ 
 \dot{\varepsilon }_2 = \varepsilon _2 (2X_2 + Z_2 + \alpha \varepsilon _2),\nonumber
\end{gather}
and
\begin{gather}
\dot{X}_3 =-1-2X_3^2Y_3 + \varepsilon _3 \left(\frac{1}{3}X_3 - \alpha X_3\right) ,\nonumber \\
 \dot{Y}_3 = 1 + 2X_3Y_3^2 + \varepsilon _3 \left(-\frac{2}{3}Y_3 + \alpha Y_3\right), \label{4-4}\\
 \dot{\varepsilon }_3 = -\frac{1}{3}\varepsilon _3^2.\nonumber
\end{gather}
We have the decomposition
\begin{gather*}
\mathbb C P^3(-1,2,3,1)  =  \mathbb C P^2(-1,2,3) \cup \mathbb C^3 \qquad (\text{disjoint}),
\end{gather*}
where $\mathbb C^3 = \{ (x,y,z)\}$ and $\mathbb C P^2(-1,2,3) = \{ \varepsilon _1=0\}\cup \{ \varepsilon _2 = 0\} \cup
\{ \varepsilon _3 = 0\}$ in coordinates.
This implies that the set $\mathbb C P^2(-1,2,3)$ is attached at inf\/inity of the phase space~$\mathbb C^3$.
Thus, the asymptotic behavior of solutions can be studied by the limit
$\varepsilon _1 \to 0$ or $\varepsilon _2 \to 0$ or $\varepsilon _3 \to 0$.

The autonomous limit is a Hamiltonian system obtained by putting $\varepsilon _3 = 0$ for equation~(\ref{4-4}).
On the set $\varepsilon _3 = 0$, the action of the extended af\/f\/ine Weyl group
$\widetilde{W}(A_1^{(1)}) \simeq \operatorname{Aut} (A_1^{(1)}) \ltimes W(A_1^{(1)})$
is reduced to the action of $\operatorname{Aut} (A_1^{(1)}) \simeq \mathbb Z_2$ def\/ined by $(X_3, Y_3) \mapsto (Y_3, X_3)$.
Further, the autonomous limit is invariant under the~$\mathbb Z_3$ action~($r=3$) induced by the orbifold structure, see Table~\ref{table4}.

The vector f\/ield~(\ref{4-2}) has f\/ixed points on the inf\/inity set
$\mathbb C P^2(-1,2,3)$ given by $(Y_1, Z_1, \varepsilon _1) = (-1,0,0)$ and $(-1/2, -1/2,0)$.
The Jacobi matrices of~(\ref{4-2}) at the f\/ixed points are
\begin{gather*}
-\left(
\begin{matrix}
2 & 1 & \alpha \\
0 & 3 & 0 \\
0 & 0 & 1
\end{matrix}
\right), \quad -
\left(
\begin{matrix}
0 & 1 & \alpha /2 \\
3 & 0 & (3\alpha -1)/2 \\
0 & 0 & 0
\end{matrix}
\right),
\end{gather*}
respectively.
The latter f\/ixed point having a zero eigenvalue corresponds to the irregular singular point,
and the former one corresponds to a movable singularity associated with
the re\-gu\-lar Laurent series solution (i) given in Section~\ref{section2.2};
The Laurent series~(i) converges to the point $(Y_1, Z_1, \varepsilon _1) = (-1,0,0)$ as $z\to z_0$.
The exceptional Laurent series solution (ii) does not converge to some point on $\mathbb C P^3(-1,2,3,1)$ as $z \to z_0$.

\begin{Remark}
These f\/ixed points are also included in the chart $(X_2, Z_2, \varepsilon _2)$.
However, it is better to use the f\/irst chart $(Y_1, Z_1, \varepsilon _1)$ because
the second chart has to be divided by the $\mathbb Z_2$ action due to the orbifold structure.
\end{Remark}

To treat the Laurent series (ii), we use the B\"{a}cklund transformation $\sigma $ def\/ined by
\begin{gather}
(\widetilde{x}, \widetilde{y}, \widetilde{z}, \widetilde{\alpha })
= \sigma (x,y,z,\alpha ) = \left( -\frac{y}{z},  zx,  -z,  1-\alpha \right).
\label{4-6}
\end{gather}
We consider another space $\mathbb C P^3(-1,2,3,1)$ with inhomogeneous coordinates denoted by
$(\widetilde{x}, \widetilde{y}, \widetilde{z})$ etc.
We glue two copies of $\mathbb C P^3(-1,2,3,1)$ by the transformation~$\sigma $.
Then, it is easy to show that the exceptional Laurent series~(ii) is converted to the regular series~(i)
in the $(\widetilde{x}, \widetilde{y}, \widetilde{z})$-chart,
and it approaches to the f\/ixed point $(\widetilde{Y}_1, \widetilde{Z}_1, \widetilde{\varepsilon }_1) = (-1,0,0)$
as $z\to z_0$.

To construct the space of initial conditions of $(\text{P}_{\text{III} (D_7)})$,
we perform the weighted blow-ups at the points $(Y_1, Z_1, \varepsilon _1) = (-1,0,0)$ and
$(\widetilde{Y}_1, \widetilde{Z}_1, \widetilde{\varepsilon }_1) = (-1,0,0)$.

(i) blow-up at $(Y_1, Z_1, \varepsilon _1) = (-1,0,0)$.

For the vector f\/ield (\ref{4-2}), put $\hat{Y}_1 = Y_1+1$ and
\begin{gather*}
u = \hat{Y}_1 - Z_1 + \alpha \varepsilon _1, \qquad v = Z_1,\qquad w = \varepsilon _2.
\end{gather*}
Then, the linear part is diagonalized around $(-1,0,0)$ and we obtain
\begin{gather*}
\dot{u} = 2u - 2u^2 + 2uv + 4v^2 - vw + \alpha (uw - 2vw), \\
\dot{v} = 3v + v(w-6u-6v + 3\alpha w), \\
\dot{w} = w - w(2u + 2v - \alpha w).
\end{gather*}
The origin $(u,v,w) = (0,0,0)$ is a singularity of the foliation of integral curves.
To resolve it, we introduce the weighted blow-up def\/ined by
\begin{gather*}
u   = u_1^2   = v_2^2 u_2   = w_3^2 u_3, \qquad
v   = u_1^3v_1   = v_2^3    = w_3^3 v_3, \qquad
w   = u_1w_1   = v_2 w_2   = w_3.
\end{gather*}
The \looseness=-1 weight $(2,3,1)$ is taken from the eigenvalues of the Jacobi matrix at the singularity.
The relation between the original coordinates $(x,y,z)$ and the new coordinates $(u_3, v_3, w_3)$ is given by
\begin{alignat}{4}
& x = w_3, \qquad &&
y = u_3 + v_3w_3-\alpha /w_3-1/w_3^2, \qquad &&
z = v_3,&\nonumber\\
& u_3 = y-xz+\alpha /x+1/x^2, \qquad &&
w_3 = x, \qquad &&
v_3 = z. &
\label{4-8}
\end{alignat}
In the new coordinates, $(\text{P}_{\text{III} (D_7)})$ is transformed into the Hamiltonian system
\begin{gather*}
\frac{dw_3}{dz} = -\frac{\partial H_1}{\partial u_3}, \qquad \frac{du_3}{dz} = \frac{\partial H_1}{\partial w_3},
\end{gather*}
with the polynomial Hamiltonian function
\begin{gather}
zH_1 = -u + u^2w^2 - \frac{1}{2}w^2z + 2uw^3 z + w^4z^2 - \alpha \big(uw +w^2z\big)
\label{4-9}
\end{gather}
(here, the subscript is omitted for simplicity).
Furthermore, we can verify the symplectic relation~(\ref{3-15}).

(ii) blow-up at $(\widetilde{Y}_1, \widetilde{Z}_1, \widetilde{\varepsilon }_1) = (-1,0,0)$.

This singularity is resolved by the same way as above by the weighted blow-up with the weight $(2,3,1)$.
The result is easily obtained by the B\"{a}cklund transformation $\sigma $ as follows.
Def\/ine the new coordinates $(u_6, v_6, w_6)$ by
\begin{gather*}
u_6 = \widetilde{y}-\widetilde{x}\widetilde{z} + \widetilde{\alpha }/\widetilde{x}+1/\widetilde{x}^2, \qquad
w_6 = \widetilde{x}, \qquad
v_6 = \widetilde{z}.
\end{gather*}
Substituting (\ref{4-6}) yields
\begin{gather}
u_6 = xz + xy - (1-\alpha )z/y + z^2/y^2, \qquad
w_6 = -y/z, \qquad
v_6 = -z.
\label{4-11}
\end{gather}
By this coordinates change, $(\text{P}_{\text{III} (D_7)})$ is transformed into the
Hamiltonian system of $(u_6, w_6)$, whose Hamiltonian function has the same form as~(\ref{4-9}),
for which $\alpha$ is replaced by $\widetilde{\alpha } = 1-\alpha$.

In this manner, all singularities are resolved and
the space of initial conditions of $(\text{P}_{\text{III} (D_7)})$ is given by
$\mathbb C^2_{(x,y)} \cup \mathbb C^2_{(u_3,w_3)} \cup \mathbb C^2_{(u_6, w_6)}$ glued by
the symplectic transformations~(\ref{4-8}),~(\ref{4-11}).

\section[The third Painlev\'{e} equation of type $D_8$]{The third Painlev\'{e} equation of type $\boldsymbol{D_8}$}\label{section5}

\looseness=-1
The orbifold $\mathbb C P^3(-1,2,4,1)$ for $(\text{P}_{\text{III}(D_8)})$ is covered by four inhomogeneous coordinates
related as
\begin{gather*}
x =   \varepsilon _1 =   X_2\varepsilon _2 =  X_3\varepsilon _3, \qquad
y =   Y_1\varepsilon _1^{-2} =   \varepsilon _2^{-2}=  Y_3\varepsilon _3^{-2},\qquad
z =   Z_1\varepsilon_1 ^{-4} =   Z_2\varepsilon _2^{-4}=  \varepsilon _3^{-4}.
\end{gather*}
We give the third Painlev\'{e} equation of type $D_8$ on the local chart $(x,y,z)$.
On the other local charts, $(\text{P}_{\text{III} (D_8)})$ is expressed as rational dif\/ferential equations.
By rewriting them as $3$-dimensional autonomous vector f\/ields, we obtain
\begin{gather}
  \dot{Y}_1 = Y_1^3 - 2Y_1^4 - Y_1Z_1 , \nonumber\\
  \dot{Z}_1 = 2 Y_1^2Z_1 - 2Z_1^2-8Y_1^3Z_1 + \varepsilon _1 Y_1^2Z_1, \label{5-2}\\
  \dot{\varepsilon }_1 = \varepsilon _1 \left(\frac{1}{2}Y_1^2 - 2Y_1^3 - \frac{1}{2}Z_1\right),\nonumber\\
\dot{X}_3 = -2X_3^2 Y_3 + \frac{1}{2} - \frac{1}{2Y_3^2} + \frac{1}{4}\varepsilon _3X_3, \nonumber\\
 \dot{Y}_3 = 2X_3Y_3^2 - \frac{1}{2} \varepsilon _3Y_3, \label{5-3}\\
 \dot{\varepsilon }_3 = -\frac{1}{4}\varepsilon _3^2.\nonumber
\end{gather}
We will not use the vector f\/ield written in $(X_2, Z_2, \varepsilon _2)$-chart.

The autonomous limit is a Hamiltonian system obtained by putting $\varepsilon _3 = 0$ for equation~(\ref{5-3}).
It is known that $(\text{P}_{\text{III}(D_8)})$ is invariant under the transformation~$\pi$ def\/ined by
\begin{gather}
(\widetilde{x}, \widetilde{y}, \widetilde{z})
= \pi (x,y,z) = \left( -\frac{xy^2}{z} + \frac{y}{2z},  \frac{z}{y},  z \right).
\label{5-4}
\end{gather}
On the set $\varepsilon _3 = 0$, this action is reduced to the $\mathbb Z_2$ action given by
$(X_3, Y_3) \mapsto (-X_3Y_3^2,  1/Y_3)$.
Further, the autonomous limit is invariant under the $\mathbb Z_4$ action ($r=4$) induced by the orbifold structure, see Table~\ref{table4}.

The vector f\/ield~(\ref{5-2}) has the f\/ixed point $(Y_1, Z_1, \varepsilon _1) = (1/2,0,0)$.
The Jacobi matrix of~(\ref{5-2}) at the f\/ixed point is
\begin{gather*}
-\frac{1}{8} \left(
\begin{matrix}
2 & 4 & 0 \\
0 & 4 & 0 \\
0 & 0 & 1
\end{matrix}
\right).
\end{gather*}
The regular Laurent series solution (i) given in Section~\ref{section2.2}
converges to this point as $z\to z_0$, while
the exceptional Laurent series solution~(ii) does not converge to some point on $\mathbb C P^3(-1,2,4,1)$.
To treat the Laurent series (ii), consider another space $\mathbb C P^3(-1,2,4,1)$ with inhomogeneous coordinates denoted by
$(\widetilde{x}, \widetilde{y}, \widetilde{z})$ etc.
We glue two copies of $\mathbb C P^3(-1,2,4,1)$ by the transformation~(\ref{5-4}).
Then, it is easy to show that the exceptional Laurent series~(ii) is converted to the regular series~(i)
in the $(\widetilde{x}, \widetilde{y}, \widetilde{z})$-chart,
and it approaches to the f\/ixed point $(\widetilde{Y}_1, \widetilde{Z}_1, \widetilde{\varepsilon }_1) = (1/2,0,0)$
as $z\to z_0$.

To construct the space of initial conditions of $(\text{P}_{\text{III} (D_8)})$,
we perform the weighted blow-ups at the points $(Y_1, Z_1, \varepsilon _1) = (1/2,0,0)$ and
$(\widetilde{Y}_1, \widetilde{Z}_1, \widetilde{\varepsilon }_1) = (1/2,0,0)$.

(i) blow-up at $(Y_1, Z_1, \varepsilon _1) = (1/2,0,0)$.

For the vector f\/ield (\ref{5-2}), put $\hat{Y}_1 = Y_1-1/2$ and
\begin{gather*}
u = \hat{Y}_1 - 2Z_1, \qquad v = Z_1,\qquad w = \varepsilon _2.
\end{gather*}
Then, the linear part is diagonalized around $(-1,0,0)$.
To resolve the singularity $(u,v,w) = (0,0,0)$ of the resultant equation, we introduce the weighted blow-up def\/ined by
\begin{gather*}
u   = u_1^2   = v_2^2 u_2   = w_3^2 u_3, \qquad
v   = u_1^4v_1   = v_2^4   = w_3^4 v_3, \qquad
w   = u_1w_1   = v_2 w_2   = w_3.
\end{gather*}
The relation between the original coordinates $(x,y,z)$ and the new coordinates $(u_3, v_3, w_3)$ is given by
\begin{alignat*}{4}
& x = w_3, \qquad &&
  y = u_3 + 2v_3w_3^2 + \frac{1}{2w_3^2}, \qquad &&
z = v_3,& \\
&  u_3 = y-2x^2z - \frac{1}{2x^2}, \qquad &&
w_3 = x, \qquad &&
v_3 = z.&
\end{alignat*}
In the new coordinates, $(\text{P}_{\text{III} (D_8)})$ is transformed into the Hamiltonian system
with the Hamiltonian function
\begin{gather*}
zH_1 = \frac{u}{2} + u^2w^2 + w^2z - \frac{2}{3}w^3z + 4uw^4z + 4w^6z^2 - \frac{w^2z}{1+2uw^2+4w^4z}
\end{gather*}
(here, the subscript is omitted for simplicity).
Furthermore, we can verify the symplectic relation~(\ref{3-15}).

The blow-up at $(\widetilde{Y}_1, \widetilde{Z}_1, \widetilde{\varepsilon }_1) = (-1,0,0)$
is done in the same way and the Hamiltonian function is easily obtained by applying the transformation (\ref{5-4})
to the above~$H_1$.
In this manner, we can obtain the space of initial conditions.

\section{The f\/ifth Painlev\'{e} equation}\label{section6}

The orbifold $\mathbb C P^3(1,0,1,1)$ for $(\text{P}_{\text{V}})$ is covered by three inhomogeneous coordinates
$(Y_1, Z_1, \varepsilon _1)$, $(X_3, Y_3, \varepsilon _3)$ and $(x,y,z)$ related as
\begin{gather*}
x =   \varepsilon _1^{-1} =   X_3\varepsilon _3^{-1},\qquad
y =   Y_1 =   Y_3, \qquad
z =   Z_1\varepsilon_1 ^{-1} =   \varepsilon _3^{-1}.
\end{gather*}
The second chart does not appear because $q = 0$.
We give the f\/ifth Painlev\'{e} equation on the local chart~$(x,y,z)$.
On the other local charts, $(\text{P}_{\text{V}})$ is expressed as rational dif\/ferential equations.
By rewriting them as $3$-dimensional autonomous vector f\/ields, we obtain
\begin{gather}
  \dot{Y}_1 = -2Y_1 + 2Y_1^2 - Y_1Z_1 + Y_1^2Z_1
 +\alpha_1 \varepsilon _1 - (\alpha _1 + \alpha _3) Y_1\varepsilon _1,\nonumber\\
 \dot{Z}_1 =-Z_1 + 2Y_1Z_1-Z_1^2+\varepsilon _1Z_1+2Y_1Z_1^2
 -(\alpha _1+\alpha _3)Z_1\varepsilon _1 + \alpha _2\varepsilon _1 Z_1^2, \label{6-2}\\
  \dot{\varepsilon }_1 = \varepsilon _1 (-1 + 2Y_1 - Z_1 + 2Y_1Z_1
 -(\alpha _1 + \alpha _3)\varepsilon _1 + \alpha _2Z_1\varepsilon _1),
\nonumber\\
 \dot{X}_3 = X_3 + X_3^2-2X_3Y_3 - 2X_3^2Y_3
 - \alpha _2 \varepsilon _3 + (\alpha _1 + \alpha _3-1) \varepsilon _3 X_3, \nonumber\\
  \dot{Y}_3 = -Y_3+Y_3^2-2X_3Y_3 + 2X_3Y_3^2
 + \alpha _1\varepsilon _3 - (\alpha _1 + \alpha _3) \varepsilon _3Y_3 ,\label{6-3}\\
 \dot{\varepsilon }_3 = -\varepsilon _3^2.\nonumber
\end{gather}
The autonomous limit is a Hamiltonian system obtained by putting $\varepsilon _3 = 0$ for equation~(\ref{6-3}).
On the set $\{ \varepsilon _3 = 0\}$, the action of the extended af\/f\/ine Weyl group
$\widetilde{W}(A_3^{(1)}) \simeq \operatorname{Aut} (A_3^{(1)}) \ltimes W(A_3^{(1)})$
is reduced to the action of $\operatorname{Aut} (A_3^{(1)})$ generated by
$(X_3, Y_3) \mapsto (Y_3-1, -X_3)$ and $(X_3, Y_3) \mapsto (X_3, 1-Y_3)$.

The vector f\/ield (\ref{6-2}) has f\/ixed points on the inf\/inity set
$\mathbb C P^2(1,0,1)$ given by
\begin{gather*}
(Y_1, Z_1, \varepsilon _1) = (0,0,0),\  (1,0,0),\  (0,-1,0),\  (1/2,-2,0),\  (1,-1,0).
\end{gather*}
The Jacobi matrices of (\ref{6-2}) at these f\/ixed points are
\begin{gather*}
-\left(
\begin{matrix}
2 & 0 & -\alpha_1 \\
0 & 1 & 0 \\
0 & 0 & 1
\end{matrix}
\right), \
\left(
\begin{matrix}
2 & 0 & -\alpha_3 \\
0 & 1 & 0 \\
0 & 0 & 1
\end{matrix}
\right), \
\left(
\begin{matrix}
-1 & 0 & \alpha_1 \\
0 & 1 & * \\
0 & 0 & 0
\end{matrix}
\right), \
\left(
\begin{matrix}
0 & -1/4 & * \\
4 & 0 & * \\
0 & 0 & 0
\end{matrix}
\right), \
\left(
\begin{matrix}
1 & 0 & -\alpha_3 \\
0 & -1 & * \\
0 & 0 & 0
\end{matrix}
\right),
\end{gather*}
respectively, where~$*$ denotes certain long numerical expressions.
The latter three f\/ixed points having zero eigenvalues correspond to the irregular singular point.
The center-(un)stable manifolds at these points can be exactly calculated for certain specif\/ic values of parameters,
which give the Riccati equations of conf\/luent hypergeometric type (Table~\ref{table3}).
The former two points correspond to movable singularities associated with
the regular Laurent series solutions~(ii) and~(i) given in Section~\ref{section2.2}.
The Laurent series~(ii) and~(i) converge to the points $(Y_1, Z_1, \varepsilon _1) = (0,0,0)$
and $(1,0,0)$ as $z\to z_0$, respectively.
The exceptional Laurent series solutions~(iii) and~(iv) do not converge to some point on
$\mathbb C P^3(1,0,1,1)$ as $z \to z_0$.

To treat the Laurent series~(iii) and~(iv), we use the B\"{a}cklund transformation~$\pi$ def\/ined by
\begin{gather*}
(\widetilde{x}, \widetilde{y}, \widetilde{z}, \widetilde{\alpha }_1, \widetilde{\alpha }_2, \widetilde{\alpha }_3)
 =  \pi (x,y,z,\alpha_1, \alpha _2, \alpha _3 )
 =  \left( z(y-1),  -\frac{x}{z},  z,  \alpha _2,  \alpha _3,  1-\alpha _1-\alpha _2-\alpha _3 \right).
\end{gather*}
We consider another space $\mathbb C P^3(1,0,1,1)$ with inhomogeneous coordinates denoted by
$(\widetilde{x}, \widetilde{y}, \widetilde{z})$ etc.
We glue two copies of $\mathbb C P^3(1,0,1,1)$ by the transformation $\pi$.
Then, the exceptional Laurent series (iii) and (iv) are converted to the regular series (i) and (ii), respectively,
in the $(\widetilde{x}, \widetilde{y}, \widetilde{z})$-chart.
They converge to the f\/ixed points $(\widetilde{Y}_1, \widetilde{Z}_1, \widetilde{\varepsilon }_1) = (1,0,0)$
and $(0,0,0)$ as $z\to z_0$, respectively.

To construct the space of initial conditions of $(\text{P}_{\text{V}})$,
we perform the weighted blow-ups at these four points.

(i) blow-up at $(Y_1, Z_1, \varepsilon _1) = (0,0,0)$.

\looseness=-1
For the vector f\/ield (\ref{6-2}), put $u = Y_1 - \alpha_1 \varepsilon _1$, $v = Z_1$, $w = \varepsilon _1$.
Then, the linear part is diagonalized.
For the system of $(u,v,w)$, we introduce the weighted blow-up def\/ined by
\begin{gather*}
u   = u_1^2   = v_2^2 u_2   = w_3^2 u_3, \qquad
v   = u_1v_1   = v_2   = w_3 v_3, \qquad
w   = u_1w_1   = v_2 w_2   = w_3.
\end{gather*}
The weight $(2,1,1)$ is taken from the eigenvalues of the Jacobi matrix at the singularity.
The relation between the original coordinates $(x,y,z)$ and the new coordinates $(u_3, v_3, w_3)$ is given by
\begin{alignat*}{4}
&
x = 1/w_3, \qquad &&
y = u_3w_3^2 + \alpha _1w_3, \qquad &&
z = v_3,&\\
& u_3 = x^2y - \alpha _1x, \qquad &&
w_3 = 1/x, \qquad &&
v_3 = z.&
\end{alignat*}
In the new coordinates, $(\text{P}_{\text{V}})$ is transformed into the Hamiltonian system, whose
Hamiltonian function is
\begin{gather*}
zH_1 = u -u^2w^2 - u^2w^3 z
 + (z-\alpha _1 + \alpha _3)uw - (2\alpha _1 + \alpha _2) uw^2z - \alpha _1(\alpha _1 + \alpha _2)wz
\end{gather*}
(here, the subscript is omitted for simplicity).
Furthermore, we can verify the symplectic relation~(\ref{3-15}).

(ii) blow-up at $(Y_1, Z_1, \varepsilon _1) = (1,0,0)$.

For the vector f\/ield (\ref{6-2}), put $\hat{Y}_1 = Y_1-1$
and $u = \hat{Y}_1 - \alpha_3 \varepsilon _1$, $v = Z_1$, $w = \varepsilon _1$.
Then, the linear part is diagonalized around $(1,0,0)$.
For the system of $(u,v,w)$, we introduce the weighted blow-up def\/ined by
\begin{gather*}
u   = u_4^2   = v_5^2 u_5   = w_6^2 u_6, \qquad
v   = u_4v_4   = v_5   = w_6 v_6, \qquad
w   = u_4w_4   = v_5 w_5   = w_6.
\end{gather*}
The \looseness=-1 relation between the original coordinates $(x,y,z)$ and the coordinates $(u_6, v_6, w_6)$ is given by
\begin{alignat*}{4}
& x = 1/w_6, \qquad &&
y = u_6w_6^2 + \alpha _3 w_6 + 1, \qquad &&
z = v_6,&\\
& u_6 = x^2y - x^2 - \alpha _3x, \qquad &&
w_6 = 1/x, \qquad &&
v_6 = z.&
\end{alignat*}
In the new coordinates, $(\text{P}_{\text{V}})$ is transformed into the Hamiltonian system, whose
Hamiltonian function is
\begin{gather*}
zH_1 = -u -u^2w^2 - u^2w^3 z
 + (-z + \alpha _1 - \alpha _3)uw - (2\alpha _3 + \alpha _2) uw^2z - \alpha _3(\alpha _2 + \alpha _3)wz
\end{gather*}
(here, the subscript is omitted for simplicity).
Again, we can verify the symplectic relation (\ref{3-15}).

The singularities $(\widetilde{Y}_1, \widetilde{Z}_1, \widetilde{\varepsilon }_1) = (1,0,0)$ and $(0,0,0)$
are resolved by the same way as above by the weighted blow-up with the weight $(2,1,1)$.
The result is easily obtained by the B\"{a}cklund transformation $\pi $ as in the previous sections.
In this manner, all singularities are resolved and it turns out that
the space of initial conditions of $(\text{P}_{\text{V}})$ is given by
f\/ive copies of $\mathbb C^2$ glued by the symplectic transformations.

\section{The sixth Painlev\'{e} equation}\label{section7}

The orbifold $\mathbb C P^3(1,0,0,1)$ for $(\text{P}_{\text{VI}})$ is covered by two inhomogeneous coordinates
$(Y_1, Z_1, \varepsilon _1)$ and $(x,y,z)$ related as
\begin{gather*}
x =   \varepsilon _1^{-1}, \qquad
y =   Y_1,\qquad
z =   Z_1.
\end{gather*}
We give the sixth Painlev\'{e} equation on the local chart $(x,y,z)$.
On the other local chart $(Y_1, Z_1, \varepsilon _1)$,
$(\text{P}_{\text{VI}})$ is expressed as a rational dif\/ferential equation.
As before, we rewrite it as a~$3$-dimensional autonomous polynomial vector f\/ield, whose
expression is too long and omitted here.
This vector f\/ield has f\/ixed points
\begin{gather*}
(Y_1, Z_1, \varepsilon _1) = (0,c,0),\   (1,c,0),\  (c,c,0),
\end{gather*}
where $c\in \mathbb C$ is an arbitrary constant.
The Jacobi matrices at these f\/ixed points are
\begin{gather*}
c \left(
\begin{matrix}
2 & 0 & -\alpha_4 \\
0 & 0 & c-1 \\
0 & 0 & 1
\end{matrix}
\right), \quad
(1-c) \left(
\begin{matrix}
2 & 0 & -\alpha_3 \\
0 & 0 & -c \\
0 & 0 & 1
\end{matrix}
\right), \quad
c(c-1) \left(
\begin{matrix}
2 & -2 & 1-\alpha _0 \\
0 & 0 & 1 \\
0 & 0 & 1
\end{matrix}
\right),
\end{gather*}
respectively.
These f\/ixed points correspond to movable singularities associated with
the regular Laurent series solutions (iii), (ii) and (i) given in Section~\ref{section2.2}.
The Laurent series~(iii),~(ii) and~(i) converge to these points as $z\to z_0$, respectively.
The exceptional Laurent series solutions~(iv) and~(v) do not converge to some point on
$\mathbb C P^3(1,0,0,1)$ as $z \to z_0$.

To treat the Laurent series~(iv) and~(v), we use the B\"{a}cklund transformation~$\pi_1$ def\/ined by
\begin{gather*}
(\widetilde{x}, \widetilde{y}, \widetilde{z},\widetilde{\alpha }_0,
 \widetilde{\alpha }_1, \widetilde{\alpha }_2, \widetilde{\alpha }_3,\widetilde{\alpha }_4)
 =  \pi_1 (x,y,z,\alpha _0, \alpha_1, \alpha _2, \alpha _3, \alpha_4 )   \\
\hphantom{(\widetilde{x}, \widetilde{y}, \widetilde{z},\widetilde{\alpha }_0,
 \widetilde{\alpha }_1, \widetilde{\alpha }_2, \widetilde{\alpha }_3,\widetilde{\alpha }_4)}{}
=  \left( -\frac{y}{z}(xy + \alpha _2),  \frac{z}{y},  z,
 \alpha _3,  \alpha _4,  \alpha _2, \alpha _0, \alpha _1 \right).
\label{7-4}
\end{gather*}
We consider another space $\mathbb C P^3(1,0,0,1)$ with inhomogeneous coordinates denoted by
$(\widetilde{x}, \widetilde{y}, \widetilde{z})$ etc.
We glue two copies of $\mathbb C P^3(1,0,0,1)$ by the transformation~$\pi_1$.
Then, the exceptional Laurent series (iv) becomes the regular series~(iii)
in the $(\widetilde{x}, \widetilde{y}, \widetilde{z})$-chart.
It converges to the f\/ixed point $(\widetilde{Y}_1, \widetilde{Z}_1, \widetilde{\varepsilon }_1) = (0,c,0)$ as $z\to z_0$.
On the other hand, the exceptional Laurent series (v) is expressed as
$\widetilde{x}(z) \sim O(1)$, $\widetilde{y}(z) \sim O(T)$ in the $(\widetilde{x}, \widetilde{y}, \widetilde{z})$-chart.
Since the solution is holomorphic at $z = z_0$, we need not resolve the singularity.

To construct the space of initial conditions of $(\text{P}_{\text{VI}})$,
we perform the weighted blow-ups at four points.

(i) blow-up at $(Y_1, Z_1, \varepsilon _1) = (0,c,0)$.

For the vector f\/ield written in $(Y_1, Z_1, \varepsilon _1)$-chart,
put $u = Y_1 - \alpha_4 \varepsilon _1$, $v = Z_1$, $w = \varepsilon _1$.
Then, we introduce the weighted blow-up with the weight $(2,0,1)$ def\/ined by
\begin{gather*}
u   = u_1^2   = w_2^2 u_2, \qquad
v   = v_1   = v_2, \qquad
w   = u_1w_1   = w_2.
\end{gather*}
This is a blow-up along the line $\{ v = \text{const}\}$.
The relation between the original coordinates $(x,y,z)$ and the new coordinates $(u_2, v_2, w_2)$ is given by
\begin{alignat*}{4}
& x = 1/w_2, \qquad &&
y = u_2w_2^2 + \alpha _2w_2, \qquad &&
z = v_2,&\\
& u_2 = x^2y - \alpha _2x, \qquad &&
w_2 = 1/x, \qquad &&
v_2 = z.&
\end{alignat*}
In the new coordinates, $(\text{P}_{\text{VI}})$ is transformed into the Hamiltonian system, whose
Hamiltonian function is
\begin{gather*}
z(z-1)H_1  =  u^3 w^4 + u z - u^2 w^2 (1+z)
  + (1 - \alpha _0 - \alpha _3 + 2 \alpha _4)u^2 w^3 \\
  \hphantom{z(z-1)H_1  =}{}
  + (\alpha_0- \alpha_4 + z \alpha_3 - z \alpha_4 -1)uw
  + \alpha_4 \big(\alpha _1 \alpha _2 + \alpha _2^2 + (1 - \alpha _0 - \alpha_3) \alpha _4 \big) w   \\
  \hphantom{z(z-1)H_1  =}{}
  + \big(\alpha _1 \alpha _2+\alpha _2^2+2 \alpha _4-2 \alpha _0 \alpha_4-2 \alpha _3 \alpha _4+\alpha _4^2 \big) u w^2
\end{gather*}
(here, the subscript is omitted for simplicity).
Furthermore, we can verify the symplectic relation~(\ref{3-15}).

The other singularities $(Y_1, Z_1, \varepsilon _1) = (1,c,0), (c,c,0)$
and $(\widetilde{Y}_1, \widetilde{Z}_1, \widetilde{\varepsilon }_1) = (0,c,0)$
are resolved by the same way as above by the weighted blow-up with the weight $(2,0,1)$.
In this manner, all singularities are resolved and
the space of initial conditions of $(\text{P}_{\text{VI}})$ is obtained by
six copies of $\mathbb C^2$ glued by the symplectic transformations; i.e.,
$(x,y)$, $(\widetilde{x}, \widetilde{y})$ and four charts arising from
blow-ups at four points. We need $(\widetilde{x}, \widetilde{y})$-chart for the exceptional
Laurent series~(v).

\pdfbookmark[1]{References}{ref}
\LastPageEnding


\begin{thebibliography}{99}
\footnotesize\itemsep=0pt

\bibitem{Bou}
Boutroux P., Recherches sur les transcendantes de {M}. {P}ainlev\'e et
 l'\'etude asymptotique des \'equations dif\/f\'erentielles du second ordre,
 \textit{Ann. Sci. \'Ecole Norm. Sup.~(3)} \textbf{30} (1913), 255--375.



\bibitem{Chi2}
Chiba H., Kovalevskaya exponents and the space of initial conditions of a
 quasi-homogeneous vector f\/ield, \href{http://dx.doi.org/10.1016/j.jde.2015.08.035}{\textit{J.~Differential Equations}}
 \textbf{259} (2015), 7681--7716, \href{http://arxiv.org/abs/1407.1511}{arXiv:1407.1511}.

\bibitem{Chi1}
Chiba H., The f\/irst, second and fourth {P}ainlev\'e equations on weighted
 projective spaces, \href{http://dx.doi.org/10.1016/j.jde.2015.09.020}{\textit{J.~Differential Equations}} \textbf{260} (2016),
 1263--1313, \href{http://arxiv.org/abs/1311.1877}{arXiv:1311.1877}.

\bibitem{Chi3}
Chiba H., A compactif\/ied Riccati equation of Airy type on a weighted projective
 space, submitted.

\bibitem{Cho}
Chow S.N., Li C.Z., Wang D., Normal forms and bifurcation of planar vector
 f\/ields, \href{http://dx.doi.org/10.1017/CBO9780511665639}{Cambridge University Press}, Cambridge, 1994.

\bibitem{CosCos}
Costin O., Costin R.D., Singular normal form for the {P}ainlev\'e equation
 {P{$1$}}, \href{http://dx.doi.org/10.1088/0951-7715/11/5/002}{\textit{Nonlinearity}} \textbf{11} (1998), 1195--1208,
 \href{http://arxiv.org/abs/math.CA/9710209}{math.CA/9710209}.

\bibitem{Ohy}
Ohyama Y., Kawamuko H., Sakai H., Okamoto K., Studies on the {P}ainlev\'e
 equations. {V}.~{T}hird {P}ainlev\'e equations of special type {$P_{\rm
 III}(D_7)$} and {$P_{\rm III}(D_8)$}, \textit{J.~Math. Sci. Univ. Tokyo}
 \textbf{13} (2006), 145--204.

\bibitem{Sak}
Sakai H., Rational surfaces associated with af\/f\/ine root systems and geometry of
 the {P}ainlev\'e equations, \href{http://dx.doi.org/10.1007/s002200100446}{\textit{Comm. Math. Phys.}} \textbf{220} (2001),
 165--229.

\bibitem{Tsu}
Tsuda T., Okamoto K., Sakai H., Folding transformations of the {P}ainlev\'e
 equations, \href{http://dx.doi.org/10.1007/s00208-004-0600-8}{\textit{Math. Ann.}} \textbf{331} (2005), 713--738.

\end{thebibliography}
\end{document}